\newcommand{\ara}{  \mathrm{a} }
\newcommand{\brb}{  \mathrm{b} }
\newcommand{\crc}{  \mathrm{c} }
\newtheorem{theorem}{ Theorem}
\newtheorem{lemma}{ Lemma}
\begin{document}
\allowdisplaybreaks
\title{Series expansion of weighted Finsler-Kato-Hardy inequalities}
\author{Konstantinos Tzirakis\\
\footnotesize Center for Mathematcal Modeling and Data Science, Osaka University, Japan\\
\footnotesize E-mail address: kostas.tzirakis@gmail.com} 
\date{}
\maketitle
\begin{abstract} In this work we consider weighted anisotropic Hardy inequalities and trace Hardy inequalities involving a general Finsler metric. We follow a unifying approach, by establishing first a sharp interpolation between them, extending the corresponding nonweighted version,  being established recently by a different approach. Then, passing to bounded domains, we obtain successive sharp improvements by adding remainder terms involving sharp weights and optimal constants, resulting in an infinite series-type improvement. The results extend, into the Finsler context, the earlier known ones within the Euclidean setting. The generalization of our results to cones is also discussed.
\end{abstract} {\bf Keywords:} Finsler-Laplacian, anisotropic Hardy-trace Hardy inequalities, sharp constants, infinite improvement
\section{Introduction} In this work, we consider anisotropic Hardy and trace Hardy inequalities, with the anisotropy being represented by a generic Finsler metric, which are associated to the so-called Finsler or anisotropic Laplacian; one of the most natural and foremost operators in anisotropic theory and Finsler geometry. Anisotropic variational problems arose in crystallograhy,  as minimization of anisotropic surface tensions for determining of equilibrium shapes of crystals; their study, was initiated in the historical work \cite{Wulff} using a geometrical construction. The relative mathematical theory were later developed, including analytic (see e.g. \cite{BianchiniCiraoloSalani}, \cite{CozziFarinaValdinoci_1}, \cite{CozziFarinaValdinoci_2}-\cite{DellaPietraGavitone_3}, \cite{FarinaValdinoci}, \cite{FeroneKawohlFinslerLaplacian}, \cite{Schaftingen}, \cite{WangXia_1}, \cite{WangXia_2}  and the references therein) and geometric aspects (see e.g. \cite{EspositoFuscoTrombetti}, \cite{FigalliMaggiPratelli}, \cite{HeLiMaGe} and the references therein), and attracts increasingly many attentions, due to its still further applications to other branches of physics, in biology and other fields.

Let  $n+\alpha>1, $ with $ \alpha\in (-1, 1),  $  and consider the upper half space  

$$ \mathbb{R}^{n+1}_+=\{z=(x,\,y)\in\mathbb{R}^{n+1}: x\in \mathbb{R}^n,\, y>0\}. $$
Starting with the Euclidean setting, the following weighted trace Hardy inequality,  \begin{equation} \label{ineq1}
H(n,\,\alpha)\int_{\mathbb R^n}\frac{u^2(x,0)}{|x|^{1-\alpha}}    \;dx
\leq \int_{\mathbb R^{n+1}_+} y^\alpha \, |\nabla u|^2\,dz, \quad \forall u\in C_0^\infty(\mathbb R^{n+1}), 
\end{equation} is well-known (see \cite{Herbst}) to hold with the best possible constant 

\begin{equation}\label{eq2}
H(n,\, \alpha)\,=\,(1-\alpha)\,\frac{ \Gamma^2(\frac{n+1-\alpha}{4}) \, \Gamma(\frac{\alpha + 1}{2})}
{\Gamma(\frac{3-\alpha}{2})\, \Gamma^2(\frac{n + \alpha -1}{4})},
\end{equation}
where $ \Gamma $ stands for the usual Gamma function defined as $ \Gamma(s)=\int_0^\infty t^{s-1}e^{-t}dt. $ It is also well known that the constant $ H(n,\, \alpha) $ is not attained in the homogeneous Sobolev space $ D^{1,2}({\mathbb R}^{n+1}_+, y^\alpha dz), $  defined as the completion of $  C_c^\infty(\overline{{\mathbb R}^{{n+1}}_+ }) $ with respect to the norm $ |\!| u |\!|_{D^{1,2}( {\mathbb R}^{n+1}_+, y^\alpha dz)}= ( \int_{{\mathbb R}^{n+1}_+} { y^\alpha}|\nabla u|^2\,dz )^{1/2}. $ We point out that inequality \eqref{ineq1} fails for $ |\alpha|\geq1. $ The power-type weight has a special significance, as \eqref{ineq1} can be translated into the sharp fractional Hardy inequality (cf. \cite{Tzirakis.FractionalHS}), via the characterization of the fractional Laplacian as a so called Dirichlet to Neumann map  \cite{Caffarelli.an.extensio.problem}.

The same situation holds for the following weighted Hardy inequality in the upper half space

\begin{eqnarray} \label{ineq3}
\frac{(n+ \alpha - 1)^2}{4}\int_{\mathbb R_+^{n+1} }\frac{y^\alpha u^2(z)}{|z|^{2 }}\,dz \leq
\int_{ \mathbb R_+^{n+1}  } y^\alpha |\nabla u|^2\,dz
,\;\;\;
 \forall u\in C_0^\infty({\mathbb R}^{n+1}).
\end{eqnarray}The constant $( n  + \alpha - 1)^2/4\, $ is the best possible, but it is not attained 
in $ D^{1,2}({\mathbb R}^{n+1}_+, y^\alpha dz). $

In the non weighted case, $ \alpha=0, $ inequality \eqref{ineq1} reduces to Kato's inequality and  \eqref{ineq3} to the classical Hardy inequality, which are ones of the well known mathematical formulations of the uncertainty principle in Quantum Mechanics, in the relativistic and non relativistic case respectively.  They are of fundamental importance in many branches of mathematical analysis, geometry and mathematical physics,  and they have been extensively studied, including several extensions and improvements. Relative to our interest in this work, we refer to \cite{Alvinoetal}, \cite{Tzirakis.Improving} for the sharp interpolation between \eqref{ineq1}, \eqref{ineq3}, in the non-weighted and weighted version respectively. 

Some of the Hardy inequalities have been also extended into the Finsler-metric context; see \cite{BrascoFranzina}, \cite{DellaPietraBlasioGavitone}, \cite{Schaftingen} and the references therein. To motivate our discussion below, we start with the inequality which interpolates between a Finsler-Kato and a Finsler-Hardy inequality (see the recent work \cite{AlvinoTakahashi}),  and asserts that,  for $ 2 \leq b < n+1, $

\begin{eqnarray} \label{ineq4}
C(n,b)\int_{  \mathbb R^{n} }\frac{u^2(x,0)}{H_0(x)}\,dx
+\frac{(b-2)^2}{4}\int_{   \mathbb R_+^{n+1} } \frac{u^2(z)}{\Phi_0^2(z)}\,dz
\leq
\int_{   \mathbb R_+^{n+1} }\Phi^2(\nabla u)\,dz,\qquad\forall u\in C_0^\infty(\mathbb R^{n+1}),
\end{eqnarray} where $ \Phi $ is a Finsler norm in $ \mathbb R^{n+1},  H_0(x)=\Phi_0(x, 0), $ 
 and $ \Phi_0(\cdot) $ is the anisotropic distance to the origin with respect to its dual norm (see \S\ref{subsection.2.1} for the precise assumptions and definitions). The optimal constant appearing in \eqref{ineq4} is \begin{flalign} \label{eq5}
C(n,b)=2 \frac{\Gamma(\frac{n-b+3}{4}) \, \Gamma(\frac{n+b-1}{4})}
{\Gamma(\frac{ n+1-b}{4}) \,\Gamma(\frac{n+b-3}{4}) }.  \end{flalign} For $ b=2, $   inequality \eqref{ineq4} reduces to the  Finsler-Kato inequality

\begin{equation}\label{ineq6} 2\frac{\Gamma^2(\frac{n+1}{4})}{\Gamma^2(\frac{n-1}{4})} \int_{  \mathbb R^{n} }\frac{u^2(x,0)}{H_0(x)}\,dx \leq \int_{   \mathbb R_+^{n+1} }\Phi^2(\nabla u)\,dz, \end{equation} with the constant in the left hand side being the best possible, while as $ b \rightarrow n+1, $ inequality \eqref{ineq4} reduces to the sharp Finsler-Hardy inequality (\cite{Schaftingen})

\begin{equation}\label{ineq7} \frac{(n-1)^2}{4}\int_{   \mathbb R_+^{n+1} } \frac{u^2(z)}{\Phi_0^2(z)}\,dz \leq \int_{   \mathbb R_+^{n+1} }\Phi^2(\nabla u)\,dz.
\end{equation}  Our first partial result is the anisotropic counterpart of \eqref{ineq1}:

\begin{equation} \label{ineq8}
H(n,\alpha)\int_{\mathbb R^n}\frac{u^2(x,0)}{ H_0^{1-\alpha}(x)}\,dx \leq \int_{\mathbb R^{n+1}_+} y^\alpha \,  \Phi^2(\nabla u)\,dz,\qquad \forall u\in C_0^\infty({\mathbb R}^{n+1}). 
\end{equation} The optimal constant $ H(n,\,\alpha) $ is given in \eqref{eq2}.   For $ \alpha = 0 $ inequality \eqref{ineq8} 
reduces to \eqref{ineq6}, and the inequality fails if $|\alpha|\geq 1. $ 

Actually, we will establish a sharp interpolation between the weighted Finsler-Kato  inequality \eqref{ineq8} and  the following weighted version of Finsler-Hardy inequality \eqref{ineq7}, \begin{eqnarray} \label{ineq9}
\frac{(n + \alpha -1)^2}{4}\int_{ \mathbb R_+^{n+1} }\frac{y^\alpha u^2(z)}{\Phi_0^2(z)}\,dz \leq
\int_{ \mathbb R_+^{{n+1}} } y^\alpha \Phi^2(\nabla u)\,dz
,\;\;\;  \forall u\in C^\infty_0(\mathbb R^{{n+1}}). \end{eqnarray} The result is stated in the following theorem. \begin{theorem}[Sharp interpolation of weighted Finsler-Kato-Hardy inequalities]
\label{Theorem1}
Let $ \alpha\in(-1,1),\, 2-\alpha \leq b<n+1. $ 
Then for all $u\in C_0^\infty(\mathbb R^{n+1}), $ the following inequality holds 

\begin{eqnarray} \label{ineq10}
K(n,\alpha, b)\int_{\mathbb R^n} \frac{u^2(x,0)}{H_0^{1-\alpha}(x)}\,dx +  \frac{(\alpha + b -2)^2}{4} \int_{ \mathbb R^{{n+1}}_+ } \frac{ y^\alpha u^2(z)}{\Phi_0^2(z)}\,dz
\leq \int_{ \mathbb R^{{n+1}}_+ } y^\alpha \Phi^2(\nabla u)\,dz,
\end{eqnarray} where 

\begin{equation} \label{eq11}
K(n, \alpha, b) \, = \,(1-\alpha)\,
\frac{ \Gamma(\frac{n -2\alpha - b+3 }{4}) \, \Gamma(\frac{n+b-1}{4}) \, \Gamma(\frac{ \alpha+1}{2})}
{\Gamma(\frac{3-\alpha}{2}) \, \Gamma(\frac{ n+1-b}{4}) \, \Gamma(\frac{n + 2\alpha +b - 3}{4})}.
\end{equation} The constant $ K(n, \alpha, b) $ is optimal. \end{theorem}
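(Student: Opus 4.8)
The plan is to prove Theorem~\ref{Theorem1} by the virtual ground state (``super-solution''/ground-state substitution) method, transcribing to the Finsler setting the scheme used in the Euclidean case (cf.\ \cite{Tzirakis.Improving}). Write $F:=\tfrac12\Phi^2$, which is convex, of class $C^1$ off the origin, positively $2$-homogeneous, with $1$-homogeneous gradient map $\nabla F=\Phi\,\nabla\Phi$; we shall use the convexity bound $F(\xi+\eta)\ge F(\xi)+\langle\nabla F(\xi),\eta\rangle$ and Euler's identity $\langle\nabla F(\xi),\xi\rangle=\Phi^2(\xi)$, together with the three structural identities encoding the anisotropy,
\[
\Phi(\nabla\Phi_0(z))=1,\qquad \langle z,\nabla\Phi_0(z)\rangle=\Phi_0(z),\qquad \nabla F(\nabla\Phi_0(z))=\frac{z}{\Phi_0(z)}\qquad(z\neq0),
\]
the last two being the anisotropic counterparts of $\langle z,\nabla|z|\rangle=|z|$ and $\nabla|z|=z/|z|$. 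By density and by replacing $u$ with $|u|$, it suffices to prove \eqref{ineq10} for $0\le u\in C_0^\infty(\mathbb R^{n+1})$.

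The core of the proof is the construction of a positive function $\phi$ on $\overline{\mathbb R^{n+1}_+}\setminus\{0\}$, smooth in the open half space, solving the anisotropic extremal problem
\begin{equation}\label{extremal-pde}
-\operatorname{div}\!\big(y^\alpha\,\nabla F(\nabla\phi)\big)=\frac{(\alpha+b-2)^2}{4}\,\frac{y^\alpha}{\Phi_0^2}\,\phi\ \ \text{in }\mathbb R^{n+1}_+,\qquad -\lim_{y\to0^+}y^\alpha\big[\nabla F(\nabla\phi)\big]_{n+1}=K(n,\alpha,b)\,\frac{\phi(x,0)}{H_0^{1-\alpha}(x)}\ \ \text{on }\mathbb R^n.
\end{equation}
Guided by the Euclidean construction, $\phi$ is sought in the separated form $\phi(z)=\Phi_0(z)^{-\sigma}\,w(t)$ with $t$ an anisotropic angular variable built from $y/\Phi_0(z)$, the exponent $\sigma$ and the profile $w$ being dictated — through a second order (Gauss hypergeometric) ODE with Robin-type conditions at the two endpoints of the $t$-interval — by the parameters $\alpha,b$; this is exactly the anisotropic transcription of the virtual ground state of \cite{Tzirakis.Improving}, and the value $K(n,\alpha,b)$ of \eqref{eq11} emerges from Gauss's summation formula for ${}_2F_1$ at the relevant argument. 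I expect the verification of \eqref{extremal-pde} to be the main obstacle: $\nabla F$ is genuinely nonlinear and $\phi$ is \emph{not} a function of $\Phi_0$ alone, so $\nabla F(\nabla\phi)$ is not a scalar multiple of $z$; the point is to write $\nabla\phi=\Phi_0^{-\sigma-1}\big(A(t)\nabla\Phi_0+B(t)\,e_{n+1}\big)$, use the $1$-homogeneity of $\nabla F$ and the three structural identities to strip off the anisotropy, and check that $\operatorname{div}(y^\alpha\nabla F(\nabla\phi))$ and the limiting normal flux reduce precisely to the Euclidean ODE identity. One should also record the consistency limits $b=2-\alpha$ (the Hardy coefficient vanishes, $K$ collapses to $H(n,\alpha)$ and \eqref{extremal-pde} recovers \eqref{ineq8}) and $b\to n+1$ ($K\to0$, recovering \eqref{ineq9}).

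Granting \eqref{extremal-pde}, the inequality follows almost mechanically. Put $u=\phi v$, so that $\nabla u=v\,\nabla\phi+\phi\,\nabla v$; convexity of $F$ at $\xi=v\nabla\phi$, $\eta=\phi\nabla v$, together with the $1$-homogeneity of $\nabla F$, gives the pointwise inequality
\[
\Phi^2(\nabla u)\ \ge\ v^2\,\Phi^2(\nabla\phi)+\phi\,\langle\nabla F(\nabla\phi),\nabla(v^2)\rangle .
\]
Multiplying by $y^\alpha$, integrating over $\mathbb R^{n+1}_+$, integrating the last term by parts (the outward normal $-e_{n+1}$ at $\{y=0\}$ producing the trace term), using Euler's identity to rewrite $\operatorname{div}(y^\alpha\phi\,\nabla F(\nabla\phi))=\phi\operatorname{div}(y^\alpha\nabla F(\nabla\phi))+y^\alpha\Phi^2(\nabla\phi)$, and finally invoking \eqref{extremal-pde}, the volume terms collapse to $\frac{(\alpha+b-2)^2}{4}\int y^\alpha u^2/\Phi_0^2$ and the boundary term to $K(n,\alpha,b)\int_{\mathbb R^n}u^2(x,0)/H_0^{1-\alpha}$, which is precisely \eqref{ineq10}. (A routine cut-off near the origin and at infinity justifies the integrations by parts; the discarded nonnegative convexity remainder is the gap in \eqref{ineq10}, which vanishes only as $v\to\mathrm{const}$, i.e.\ $u\to\phi$.)

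It remains to argue optimality of the constants. Sharpness of the Hardy coefficient $(\alpha+b-2)^2/4$ is inherited from that of \eqref{ineq9}: it cannot be enlarged even after dropping the nonnegative trace term, and a family concentrating at the origin makes the trace integral negligible. Sharpness of $K(n,\alpha,b)$ follows from the standard near-extremal argument: testing \eqref{ineq10} with $u_\varepsilon=\phi\,\eta_\varepsilon$, where $\eta_\varepsilon$ is a logarithmic cut-off confining $\phi$ away from $0$ and $\infty$, and using \eqref{extremal-pde}, the borderline (non-)integrability of $\phi$ in the relevant weighted spaces, and dominated convergence, one checks that the quotient of $\big(\int y^\alpha\Phi^2(\nabla u_\varepsilon)-\tfrac{(\alpha+b-2)^2}{4}\int y^\alpha u_\varepsilon^2/\Phi_0^2\big)$ by $\int_{\mathbb R^n}u_\varepsilon^2(x,0)/H_0^{1-\alpha}$ tends to $K(n,\alpha,b)$ as $\varepsilon\to0$; hence $K(n,\alpha,b)$ is the largest admissible constant, and it is not attained.
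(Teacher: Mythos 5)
Your proposal follows essentially the paper's own route: the same separated, hypergeometric-profile ground state solving the Euler--Lagrange system \eqref{eq33}, the same Picone/ground-state-substitution mechanism (your convexity bound for $F=\tfrac12\Phi^2$ together with Euler's identity is exactly the paper's inequality $f(u,\psi)\ge 0$ in \eqref{eq48}--\eqref{eq49}, and the identification of the constant likewise comes from the hypergeometric connection formulas at infinity), and sharpness via near-extremal truncations of the ground state (the paper flattens $\psi$ at $y=\epsilon$ and applies L'H\^opital rather than using logarithmic cut-offs, but the mechanism is the same). One caveat: your aside that the coefficient $\frac{(\alpha+b-2)^2}{4}$ ``cannot be enlarged even after dropping the trace term'' is false for $b<n+1$, since the sharp pure Hardy constant is $\frac{(n+\alpha-1)^2}{4}>\frac{(\alpha+b-2)^2}{4}$; however, the theorem only asserts optimality of $K(n,\alpha,b)$ for the fixed Hardy coefficient, so this extraneous remark does not affect the argument.
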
 Let us point out explicitly that  $ K(n, \alpha, b) = H(n, \alpha), $ when $ b = 2-\alpha$ and 
$ K(n, \alpha, b)\rightarrow 0,\,  $ as $ b\rightarrow n+1. $ This means that if $ b = 2 - \alpha, $ then inequality (\ref{ineq10}) reduces to \eqref{ineq8}, while as $ b \rightarrow n+1 $ then inequality  (\ref{ineq10}) reduces to \eqref{ineq9}. Note also that one can deduce \eqref{ineq10}, simply considering a convex combination of  \eqref{ineq8} and \eqref{ineq9}, however the constants obtained by this argument are not in general optimal. The optimal constant is not attained in the space of functions for which the right hand side of \eqref{eq11} is finite, yet no $L^p$-improvement of \eqref{eq11} is possible; see Section \ref{section.3} for a precise statement and proof of this fact. The non-weighted version of \eqref{ineq10}, i.e. for $ \alpha=0, $ has been proved in the recent work \cite{AlvinoTakahashi} by adopting in the Finslerian context a classical method in the Calculus of Variations introduced by Weierstrass. For the proof of the weighted version \eqref{ineq10} we will follow a different argumentation, which further leads to a refined version with sharp remainder terms, when we restrict the attention to functions supported on a bounded domain.  

More precisely, let us consider a bounded domain $ U $ containing the origin. Passing from the whole of the halfspace into the bounded domain $ U^+:= \mathbb R^{n+1}_+ \cap U, $ the Finsler-Hardy inequality \eqref{ineq9} reads 
\begin{eqnarray} \label{ineq12} 
\frac{(n + \alpha -1)^2}{4}\int_{U^+ }\frac{y^\alpha u^2 (z)}{ \Phi_0^2(z)}\,dz \leq
\int_{U^+} y^\alpha  \Phi^2(\nabla u)\,dz,\;\;\; \forall u\in C^\infty_0(U),
\end{eqnarray} with the same optimal constant $ ({n +\alpha-1})^2/4\, $ as in \eqref{ineq9}, due to the scaling invariance of \eqref{ineq7}. By approximation it follows that inequality \eqref{ineq12}  still holds with the same optimal constant for all functions $ u $ in the space $ D^{1,2}(U^+, y^\alpha dz), $  defined as the completion of $  C_{0}^\infty(\overline{\mathbb R^{n+1}_+}\cap U) $ with respect to the norm 
$ |\!| u |\!|_{D^{1,2}(U^+, y^\alpha dz)}= ( \int_{U^+} y^\alpha |\nabla u|^2\,dz)^{1/2}. $  It is clear that the constant $ (n+\alpha-1)^2/4 $ is not attained in $ D^{1,2}(U^+, y^\alpha dz), $  but contrary to (\ref{ineq9}),{ it is possible to improve \eqref{ineq12} , by adding $L^p$-norms in the left hand side. Indeed,  we will give below a sharp estimate of the correction term in \eqref{ineq12} . 
Similarly, if restrict the attention to the test functions supported on $U, $ then the trace Hardy inequality \eqref{ineq8} reads

\begin{eqnarray} \label{ineq13}
H(n, \alpha)\int_{U_0}\frac{u^2(x,0)}{H_0^{1-\alpha}(x)}\,dx\leq
\int_{U^+}{y^\alpha}{\Phi^2(\nabla u)}\,dz
,\quad \forall u\in C_0^\infty(U),
\end{eqnarray}
where, for notational convenience,  we abbreviate the boundary's portion $ \{x:(x,0)\in U\}$ to $ U_0. $ The  optimal constant $ H(n, \alpha) $ is the same as in \eqref{ineq8},  which clearly is not achieved in $D^{1,2}(U^+,y^\alpha dz).$ Again, driven by the fact that the best constant is not attained, we will show that we can improve \eqref{ineq13}, by adding sharp $L^p$-remainder terms in the left hand side. Actually, both \eqref{ineq12}  and \eqref{ineq13} will turn out to share the same sharp improvements, with the same best constants and optimal weights of the same singularity.

In our study of \eqref{ineq12} and \eqref{ineq13} we will follow a uniform approach by considering the sharp interpolation between them (cf. \eqref{ineq10}),

 \begin{eqnarray} \label{ineq14}
K(n,\alpha, b)\int_{U_0}
\frac{u^2(x,0)}{H_0^{1-\alpha}(x)}\,dx
 +  \frac{(\alpha + b -2)^2}{4} \int_{U^+ } \frac{ y^\alpha u^2(z)}{\Phi_0^2(z)}\,dz
\leq
 \int_{ U^+ } y^\alpha \Phi^2(\nabla u)\,dz,\quad \forall u\in C_0^\infty(U).
\end{eqnarray}

Then we will show that we can successively improve \eqref{ineq14} by adding to the left hand side lower order terms with
optimal weights and best constants. To properly state the result, for $ 0<\rho <1, $ we define recursively the functions 

\begin{equation*}
X_1(\rho)=\frac{1}{1-\ln \rho}, \;\;\; X_k(\rho) = X_1(X_{k-1}(\rho)), \;\;\; k=2,3,\ldots,\;\;\;\mbox{and we abbreviate}\;\; P_k(\rho) = X_1 X_2 \cdots X_k(\rho). 
\end{equation*}
Our result is summarized in the following theorem. \begin{theorem}\label{Theorem2} 
Let $ \,2 - \alpha \leq b < n+1, $  with $ \alpha \in (-1, 1). $
Then the following inequality is valid for all $ u\in  C_0^\infty(U), $  
\begin{eqnarray} \label{ineq15}
K(n,\alpha,b) \int\limits_{U_0}\frac{u^2(x, 0)}{H_0^{1-\alpha}(x)}\,dx +  \frac{(\alpha+b-2)^2}{4} \int\limits_{U^+} \frac{ y^\alpha \, u^2(z)}{\Phi_0^2(z)}\; dz + 
\frac{1}{4}\sum_{i=1}^\infty 
\int\limits_{U^+} \frac{ y^\alpha \,P^2_i}{\Phi_0^2(z)} \, u^2(z) \;dz \leq
\int\limits_{U^+} y^\alpha \, \Phi^2(\nabla u)\;dz,
\end{eqnarray} where the constant $ K(n,\alpha,b) $ is given in (\ref{eq11}) and 
$ P_i=P_i(\Phi_0(z)/D), $ with $ D:=\sup\limits_{z\in  U^+} \Phi_0(z). $ For any $ k=1,2,3,\cdots $ and fixed $ b, $ the constant $ \frac{1}{4} $ of the $k-$th remainder  term is optimal, i.e. $$ \frac{1}{4} = \inf\limits_{u\in  C_0^\infty(U)}\frac{\int\limits_{ U^+} y^\alpha  \Phi^2(\nabla u) \, dz\alpha, b)\int\limits_{U_0}
\frac{u^2(x,0)}{H_0^{1-\alpha}(x)}\,dx - \frac{(\alpha+b-2)^2}{4}\int\limits_{ U^+} \frac{  y^\alpha \,u^2(z)}{\Phi_0^2(z)} dz
-\frac{1}{4}\sum\limits_{i=1}^{k-1} \int\limits_{U^+} \frac{ y^\alpha\, P^2_i}{ \Phi_0^2(z)}\, u^2(z)\,dz}{\int\limits_{ U^+} \frac{ y^\alpha P^2_k}{\Phi^2(z)}\, u^2\,dz}. $$
Moreover, the power of the logarithmic weights in the remainder terms is optimal, in the sense that \eqref{ineq15} fails for more singular weights.
\end{theorem}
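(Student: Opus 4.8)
\emph{Strategy.} The plan is to prove, for every $k\ge 1$, the truncated inequality
\begin{equation*}
K(n,\alpha,b)\int_{U_0}\frac{u^2(x,0)}{H_0^{1-\alpha}(x)}\,dx+\frac{(\alpha+b-2)^2}{4}\int_{U^+}\frac{y^\alpha u^2}{\Phi_0^2}\,dz+\frac14\sum_{i=1}^{k}\int_{U^+}\frac{y^\alpha P_i^2}{\Phi_0^2}\,u^2\,dz\ \le\ \int_{U^+}y^\alpha\,\Phi^2(\nabla u)\,dz,
\end{equation*}
and then to let $k\to\infty$ (monotone convergence on the left). Each truncation I would obtain by the anisotropic ground--state substitution: given a positive $\psi\in C^2\bigl(\overline{U^+}\setminus\{0\}\bigr)$ and writing $u=\psi v$, the $1$--homogeneity and convexity of $\Phi$ give the Weierstrass--type pointwise inequality
\begin{equation*}
\Phi^2(\nabla u)\ \ge\ \bigl\langle\,\Phi(\nabla\psi)\,\nabla_\xi\Phi(\nabla\psi)\,,\ \nabla(\psi v^2)\,\bigr\rangle ,
\end{equation*}
where $\langle\xi,\nabla_\xi(\Phi^2)(\xi)\rangle=2\Phi^2(\xi)$ is used to cancel the diagonal terms. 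Multiplying by $y^\alpha$, integrating over $U^+$ and integrating by parts — the contribution on $\partial U$ vanishing since $v|_{\partial U}=0$ — reduces the truncated inequality to finding $\psi$ with
\begin{equation*}
-\operatorname{div}\!\bigl(y^\alpha\,\Phi(\nabla\psi)\,\nabla_\xi\Phi(\nabla\psi)\bigr)\ \ge\ y^\alpha\,\psi\,\Bigl(\frac{(\alpha+b-2)^2}{4\,\Phi_0^2}+\frac14\sum_{i=1}^{k}\frac{P_i^2}{\Phi_0^2}\Bigr)\quad\text{in }U^+ ,
\end{equation*}
together with the boundary behaviour $\displaystyle\lim_{y\to0^+}y^\alpha\,\Phi(\nabla\psi)\,\bigl[\nabla_\xi\Phi(\nabla\psi)\bigr]_{n+1}=-\,K(n,\alpha,b)\,\psi(x,0)\,H_0^{\alpha-1}(x)$ on $U_0$.

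\emph{Construction of $\psi$.} The candidate is
\begin{equation*}
\psi(z)\ =\ \Phi_0(z)^{-(\alpha+b-2)/2}\;g\!\Bigl(\frac{y}{\Phi_0(z)}\Bigr)\;\prod_{i=1}^{k}X_i^{-1/2}\!\Bigl(\frac{\Phi_0(z)}{D}\Bigr).
\end{equation*}
To verify it I would pass to anisotropic polar coordinates adapted to $\Phi_0$, in which, by the Finsler identities $\Phi(\nabla\Phi_0)=1$, $\nabla_\xi\Phi(\nabla\Phi_0)=z/\Phi_0$ and the separation--of--variables structure of the Finsler Laplacian exploited in \cite{AlvinoTakahashi}, \cite{Schaftingen}, the operator $\operatorname{div}(y^\alpha\Phi(\nabla\psi)\nabla_\xi\Phi(\nabla\psi))$ splits into a radial part in $\rho=\Phi_0$ and an angular part in $s=y/\Phi_0$. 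The radial factor $\Phi_0^{-(\alpha+b-2)/2}\prod X_i^{-1/2}$ then produces, through the elementary identities $X_i'(\rho)=\rho^{-1}P_{i-1}(\rho)X_i^2(\rho)$ and $P_i=P_{i-1}X_i$, exactly the potential $\tfrac{(\alpha+b-2)^2}{4\rho^2}+\tfrac14\sum_{i\le k}P_i^2/\rho^2$, while $g$ is forced to solve a hypergeometric ODE in $s$ on $(0,\infty)$; choosing the solution bounded at $s=\infty$ and imposing the Neumann--type condition at $s=0$ (which is where $\alpha\in(-1,1)$ enters, via the local behaviour $g(s)=1-c\,s^{1-\alpha}+\cdots$) makes the boundary ratio equal $-K(n,\alpha,b)\psi(x,0)H_0^{\alpha-1}$, with $K(n,\alpha,b)$ the Gamma--function expression \eqref{eq11} arising from the connection coefficients. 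This is exactly the computation of \cite{AlvinoTakahashi} for $\alpha=0$ and of \cite{Tzirakis.Improving} in the Euclidean weighted case; the task here is to run the anisotropic part, the $y^\alpha$--weighted hypergeometric part and the logarithmic iteration simultaneously. A routine excision of a vanishing anisotropic ball about the origin legitimizes the integrations by parts for $u\in C_0^\infty(U)$.

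\emph{Optimality.} Fix $b$ and $k$. To show $\tfrac14$ is optimal in the $k$--th remainder I would test the quotient defining the $k$--th optimal constant with $u_\varepsilon=\psi_{k-1}\,\zeta\,w_\varepsilon(\Phi_0/D)$, where $\psi_{k-1}$ is the $(k-1)$--truncated ground state above, $\zeta\in C_0^\infty(U)$ is a fixed cut--off equal to $1$ near the origin, and $w_\varepsilon$ is a logarithmically corrected concentrating profile (comparable to $X_k^{-\varepsilon}$ near $\rho=0$, constant away from it). Since $\nabla\bigl(w_\varepsilon(\Phi_0)\bigr)=w_\varepsilon'(\Phi_0)\nabla\Phi_0$ and $\Phi(\nabla\Phi_0)=1$, the cross terms cancel exactly as in the substitution step, and both numerator and denominator collapse, to leading order in $\varepsilon$, to one--dimensional integrals in $\rho$ against an explicit measure; the classical computation then gives the limit $\tfrac14$, so no larger constant is admissible. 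Pushing the same family to the borderline exponent shows that replacing $P_k^2$ by the strictly more singular weight $P_{k-1}^2\,X_k^{\,2-\mu}$ with $\mu>0$ makes the corresponding integral diverge while the energy stays finite, which is the asserted optimality of the powers of the logarithms.

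\emph{Main obstacle.} I expect the crux to be the verification of the differential inequality for $\psi$: one must check that, in the anisotropic--polar picture, the logarithmic radial correction $\prod_{i=1}^{k}X_i^{-1/2}(\Phi_0/D)$ — which depends on $\Phi_0$ alone — generates no spurious cross terms with the angular factor $g(y/\Phi_0)$ or with the anisotropy, so that one still recovers \emph{exactly} the potential $\tfrac14\sum_{i\le k}P_i^2/\Phi_0^2$ and the \emph{unchanged} hypergeometric equation for $g$; equivalently, that the three known ingredients (Finsler separation, $y^\alpha$--weighted hypergeometric analysis, logarithmic iteration) are mutually compatible. Secondary points are the bookkeeping near $\partial B_{\Phi_0}(D)$, where the $X_i$ are of order one, and the density argument extending every step from $C_0^\infty(U)$ to the natural energy space under the singular weights.
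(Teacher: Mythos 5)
Your overall scheme (a Picone/ground-state substitution giving a truncated inequality for each $k$, letting $k\to\infty$, then perturbations of the corrected ground state for the optimality of $\tfrac14$ and of the logarithmic powers) is the same as the paper's, but the central construction has a genuine flaw, and it sits exactly at the point you flag as the main obstacle. The correct ground state is $\psi=\Phi_0^{\gamma}H_0^{\beta}B(y/H_0)$ with $\gamma=\tfrac{2-\alpha-b}{2}$, $\beta=\tfrac{b-n-1}{2}$ (see \eqref{eq37}--\eqref{eq38}); written in your variable $s=y/\Phi_0$ it equals $\Phi_0^{-(n+\alpha-1)/2}$ times a bounded positive angular profile, so its homogeneity degree is the \emph{critical} one $-(n+\alpha-1)/2$, independent of $b$ (cf.\ \eqref{eq45}, \eqref{rel46}); the parameter $b$ enters only through the angular factor and the trace constant. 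Your ansatz $\Phi_0^{-(\alpha+b-2)/2}\,g(y/\Phi_0)\prod_i X_i^{-1/2}$ has degree $-(\alpha+b-2)/2$, strictly subcritical for every $b<n+1$, and this is fatal twice over. First, in the effective radial dimension $n+1+\alpha$ a power $\rho^{-\mu}$ contributes $\mu(n+\alpha-1-\mu)/\rho^{2}$, not $\mu^{2}/\rho^{2}$, and multiplying by $\prod_{i\le k}X_i^{-1/2}(\rho)$ produces the clean potential $\tfrac14\sum_{i\le k}P_i^2/\rho^2$ only when the radial operator acting on the logarithmic factor reduces to the two-dimensional one $h''+h'/\rho$, i.e.\ only at $\mu=(n+\alpha-1)/2$; for your $\mu$ a first-order remainder proportional to $\bigl(n+\alpha-1-(\alpha+b-2)\bigr)S_k/\rho^{2}$, $S_k=\sum_{i\le k}P_i$, survives, and near the origin it dominates $\sum_i P_i^2/\rho^2$ — precisely the ``spurious cross term'' you hoped would be absent, so the claimed potential and the sharp constant $\tfrac14$ cannot come out of this ansatz. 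Second, with your exponent the trace condition cannot carry the sharp constant $K(n,\alpha,b)$: already for $\alpha=0$, $b=2$ and $\Phi$ Euclidean your candidate is homogeneous of degree $0$, and integrating the equation over the half-sphere forces the boundary constant of any positive degree-zero profile to vanish, whereas the Kato constant is strictly positive; it is exactly the critical degree that creates the room $(n+\alpha-1)^2/4$ needed to accommodate the negative angular contribution encoded by the trace term, and it is this mechanism that yields \eqref{eq11} through the hypergeometric connection coefficients.

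Once the correct $\psi$ is used and one sets $\psi_k=\psi\,P_k^{-1/2}(\Phi_0)$ as in \eqref{eq62}, your outline coincides with the paper's proof: $\psi_k$ solves \eqref{eq65} (the cross-term cancellation you worry about is exactly the Euler identity \eqref{eq45} at the critical degree), the Picone-type identity \eqref{eq47} with $f(u,\psi_k)\ge 0$ gives the truncated inequality and hence \eqref{ineq15} by letting $k\to\infty$; optimality of $\tfrac14$ is obtained by testing with $\eta\,\psi_k\,\rho^{\varepsilon_0}X_1^{\varepsilon_1}\cdots X_k^{\varepsilon_k}$ and sending the exponents to zero successively, and the sharpness of the logarithmic power by the borderline family built on $X_k^{(\epsilon_m-1)/2}$. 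So the repair is not a matter of bookkeeping near $\partial U$ or density, but of replacing the radial exponent $-(\alpha+b-2)/2$ by $-(n+\alpha-1)/2$ and determining the angular profile from the weighted hypergeometric problem \eqref{eq38}--\eqref{eq39} with the boundary behaviour \eqref{eq42}.
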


It is worth remarking about estimate \eqref{ineq15} for the non-weighted limiting instances $ b \to n+1 $ and $ b=2. $ For the special non-weighted Hardy case,  where $ \alpha = 0, $ $ b \to n+1, $ estimate \eqref{ineq15} reads 
\begin{equation}\label{ineq16}
\frac{(n-1)^2}{4} \int_{U^+} \frac{u^2(z)}{\Phi_0^2(z)}\,dz+\frac{1}{4}\sum_{i=1}^\infty \int_{ U^+} \frac{P^2_i}{\Phi_0^2(z)}\, u^2(z)\,dz \leq \int_{ U^+} \Phi^2(\nabla u)\,dz, \quad \forall u\in C_0^\infty(U),
\end{equation}
which extends, to the Finsler context, the earlier result \cite{Tertikasetal.Optimizing} where $ \Phi $ is the standard Euclidean metric. Let us also refer, at this point, to a sharp infinite series improvement obtained recently in \cite{DellaPietraBlasioGavitone}, regarding Hardy inequalities involving a general Finsler-distance to the boundary, and its earlier result \cite{BarbatisSeriesExpansion} in the Euclidean $L^p-$setting. The problem of series-type improving Hardy inequalities dates back, in the standard Euclidean context, by the question raised in the influential work \cite{Brezis.blowup}.

On the other hand, the non-weighted Kato case, i.e. for $ \alpha = 0 $ and $ b=2, $ estimate \eqref{ineq15} yields the following improvement of the Finsler-Kato inequality,

\begin{eqnarray}\label{ineq17}
2\frac{ \Gamma^2(\frac{n+1}{4})}
{\Gamma^2(\frac{n -1}{4})
 } \int_{U_0} \frac{u^2(x,0)}{H_0(x)}\,dx
+ \frac{1}{4} \sum_{i=1}^\infty 
\int_{U^+} \frac{P^2_i}{\Phi_0^2(z)}\, u^2(z) \,dz \leq  \int_{U^+}  \Phi^2(\nabla u) \,dz, \quad \forall u \in C_0^\infty(U).
\end{eqnarray}
In view of \eqref{ineq16}-\eqref{ineq17}, it turns out that both the Finsler-Hardy inequality and the Finsler-Kato inequality admit the same sharp series-type improvement.
Notice that the Finsler-Hardy inequalities follow from the classical ones, via the equivalence of norms,  however not with the best constants. To prove Theorems \ref{Theorem1}, \ref{Theorem2} we will adjust the argumentation \cite{Tzirakis.Improving} (see also \cite{Tertikasetal.Optimizing} for the case $ b \to n+1, \alpha=0$)  to the Finsler-metrics' setting. The approach is mainly based on an application of a Picone-type identity for the solutions of the Euler-Lagrange equations associated to the best constant for the interpolation inequality \eqref{ineq10} and its improved version \eqref{ineq15}, respectively. These solutions have not the right summability, thus they have no sense as minimizers, however by employing suitable perturbations we prove the optimality of the constants and the weights. Let us also note that we take advantage of the special structure of the variational problem, yielding $H_0$-symmetric (formal) minimizers, and thus leading to a dimension reduction of the problem, together with the fact that the $ H$-Finsler-Laplacian acts as a linear operator on $H_0$-radially symmetric smooth functions.

Let us finally note that an extension of the non-weighted interpolation inequality \eqref{ineq4} have been also derived recently in \cite{AlvinoTakahashi}, to the general case of a $\Phi$-cone with its vertex at the origin (see \S\ref{section.4} for the precise definition). A straightforward generalization of our argumentation may be applied to get also a sharp infinite series-type improvement on finite cones; see Theorem \ref{Theorem3} in Section \ref{section.4}.

The rest of this paper is organized as follows. In the preliminary Section \ref{section.2}, after giving a short reminder on Finsler norms, we introduce and survey the formal optimizer of \eqref{ineq10}, which will play a leading role to prove our results in the subsequent sections: In Section \ref{section.3}, we give the proof of Theorem \ref{Theorem1}, and the proof of Theorem \ref{Theorem2}  is given in Section \ref{section.4}, where we also discuss the extension of the results to the cones' case.
\section{Preliminaries}\label{section.2}
In this section we fix our basic notation, and collect the essential preliminaries that we will extensively use for the proof of Theorems \ref{Theorem1},   \ref{Theorem2}. More precisely, in \S \ref{subsection.2.1} we give a summary of what is needed about Finsler norms, and introduce the associated anisotropic Laplacian. Then, in \S \ref{subsection.2.2} we consider the Euler-Lagrange equations associated to the interpolation inequality  \eqref{ineq10}, and establish the key properties of the formal optimizer  (see \eqref{eq37} below) of the sharp constant $ K(n, \alpha, b). $
\subsection{Finsler norms}\label{subsection.2.1} To simplify our notation, in general we will use subscripts, though the customary usage of superscripts, especially in Finsler geometry.

Let the function $ H:\mathbb R^n \mapsto [0, \infty) $ be a norm of class $ C^2(\mathbb R^n\setminus\{0\}), $ with $ H^2 $ being strictly convex. In particular, $ H $ is positively homogeneous of degree 1, 

\begin{equation}\label{eq18}
H(\lambda \xi) = |\lambda| H(\xi),\qquad \forall \xi \in \mathbb R^n,\;\lambda \in \mathbb R.
\end{equation}
Differentiating \eqref{eq18} with respect to $ \xi $ yields that

\begin{equation}\label{eq19}
(\nabla H)(\lambda \xi) = \operatorname{sgn} \lambda\, \nabla H(\xi),\qquad \forall \xi \in \mathbb R^n\setminus\{0\},\;\lambda \in \mathbb R\setminus\{0\},
\end{equation}
while, differentiating \eqref{eq18} with respect to  
$ \lambda,  $ and then setting $ \lambda=1, $ we  get the radial directional derivative 

\begin{equation}\label{eq20}
\langle \xi, \nabla H(\xi)\rangle = H(\xi), \qquad \forall \xi \in \mathbb R^n\setminus\{0\},
\end{equation}
where $ \langle\cdot,\cdot\rangle $ denotes the standard Euclidean inner product. The equivalence with the standard Euclidean norm $ |\cdot|, $ reads, for some $ \gamma_1, \gamma_2 \in(0, \infty), $

\begin{equation}\label{ineq21}
\gamma_1 |\xi| \leq H(\xi)\leq \gamma_2 |\xi|,\qquad \forall \xi \in \mathbb R^n,
\end{equation}
where $ |\xi| = (\sum_{i=1}^n\xi_i^2)^{1/2}, \,\xi=(\xi_1,\ldots,\xi_n). $ 
The dual 
 norm of $ H $ is the function $ H_0:\mathbb R^n\mapsto [0,\infty) $ defined as

\begin{equation}\label{eq22}
H_0(x) := \sup\limits_{\xi\in\mathbb R^n \setminus \{0\} } \frac{\langle x, \xi \rangle}{H(\xi)},\qquad \forall x \in \mathbb R^n.
\end{equation}
Notice that $ H_0 $ is differentiable at all $ x \neq 0, $ positively homogeneous of degree 1, and the functions  $ H, H_0 $ are polar to each other in the sense that (cf. \cite[Corollary 7.4.2 and Theorem 15.1]{Rockafellar})  

\begin{equation}\label{eq23} 
H(\xi) = (H_0)_0(\xi)=\sup\limits_{x\in\mathbb R^n \setminus \{0\} } \frac{\langle x, \xi \rangle}{H_0(x)},\qquad \forall \xi \in \mathbb R^n. 
\end{equation}
Here as throughout the paper, we use the variable $ x $ to denote a point in
the ambient space $ \mathbb R^n $ equipped with the norm $ H_0, $ and the variable $ \xi $ for an element in its dual space, identified with $ \mathbb R^n $ endowed with $ H. $

It follows, directly from \eqref{ineq21}-\eqref{eq22}, that 

\begin{equation}\label{ineq24}
\frac{1}{\gamma_2} |x| \leq H_0(x)\leq \frac{1}{\gamma_1} |x|,\qquad \forall x \in \mathbb R^n,
\end{equation}
where $\gamma_1, \gamma_2 $ are the constants appearing in \eqref{ineq21}. 

The following connection between $ H $ and $ H_0 $ will be also useful (see e.g. \cite[Proposition 6.2]{Schaftingen}),

\begin{equation}\label{eq25}
H\bigl(\nabla H_0(x)\bigr) = 1,
\qquad 
 \forall x \in \mathbb R^n\setminus\{0\}.
\end{equation} An analogous property of \eqref{eq25}  holds true with the roles of $ H $ and $ H_0 $ being interchanged,  i.e.

\begin{equation}\label{eq26}
H_0\bigl(\nabla H(\xi)\bigr) = 1,\qquad \forall \xi \in \mathbb R^n\setminus\{0\},
\end{equation} as well as the following dual relation of \eqref{eq20}

\begin{equation}\label{eq27} 
\langle x, \nabla H_0(x)\rangle = H_0(x),\qquad \forall x \in \mathbb R^n\setminus\{0\}.
\end{equation} As a straightforward consequence of the definition \eqref{eq22}, we have the 
following Schwarz-type inequality

\begin{equation}\label{ineq28}
\langle x, \xi \rangle \leq H_0(x) H(\xi),\qquad \forall x, \xi \in \mathbb R^n,
\end{equation} with equality holding if $ x= \lambda H(\xi) \nabla H(\xi),  $ for some $ \lambda  \geq 0, $ as can be verified by \eqref{eq18}, \eqref{eq20}, \eqref{eq26}. 
From \eqref{eq20}, \eqref{eq23},  \eqref{eq25},   \eqref{eq26}, \eqref{eq27}, one finds that (cf. \cite[Lemma 2.2]{BellettiniPaolini})

\begin{equation}\label{eq29}
\nabla_{\!\xi} H\bigl(\nabla H_0(x)\bigr) = \frac{x}{H_0(x)},\qquad \forall x \in \mathbb R^{n}\setminus\{0\}.
\end{equation}

The values at any point of the gradient with respect to the $ x$-variables, of a function $ u,  $ denoted by $ \nabla_{\!x}u, $ 
 are considered as elements of $ \mathbb R^n $ endowed with $ H. $
The anisotropic Laplacian on $ \mathbb R^n,  $ associated  to  $ H, $ is  the operator defined by  (see e.g. \cite{FeroneKawohlFinslerLaplacian}) 

\begin{equation}\label{eq30}
\Delta_H u= \operatorname{div}_{\!x} \bigl(H(\xi)\nabla_{\!\xi} H(\xi)\bigr)\Big|_{\xi=\nabla_{\!x}  u}
\end{equation} where $ \operatorname{div}_{x} $ stands for the divergence operator with respect to the $x$-variables.  This operator extends the notion of the classical Laplacian to the anisotropic space $ \mathbb R^n $  equipped with a generic Finsler norm $ H. $ In the trivial case of $ H $ being the standard Euclidean norm $ H(\xi) = (\sum_{i=1}^n\xi_i^2)^{1/2},\, $ and more generally,  for a symmetric positive definite $ n\times n $ matrix $ A, $ 
 the norm $ H(\xi) = \sqrt{\langle A\xi, \xi\rangle},  $ the associated operator $ \Delta_H $ is linear. However in general,  the anisotropic Laplacian $ \Delta_H $ is a nonlinear operator, as in the typical example of $ H(\xi) = (\sum_{i=1}^n |\xi_i|^p)^{1/p},\; p>2. $ On account of \eqref{eq20}, \eqref{ineq21}, it is easy to deduce that $ \Delta_{ H}  $ satisfies the ellipticity condition

$$ \sum\limits_{i,j=1}^n \frac{\partial (H(\xi) H_{\xi_i}(\xi))}{\partial \xi_j} \xi_i\xi_j=H^2(\xi)\geq \gamma_1^2 |\xi|^2.$$ 

Occasionally in bibliography, a Finsler metric  is further required to be strongly convex, in the sense that the Hessian $ \nabla^2H^2(\xi) $ is positive definite at all $ \xi \neq 0. $ The weaker assumption of strict convexity is enough for our argumentation, without excluding from our results some interesting cases. %
Let us also note that, an equivalent definition of $ H $ in geometric terms is also considered, as a non negative convex functional on a $ (n-1)$-dimensional sphere,  however being beyond the viewpoint of our analysis below.  

We will use the variable $ \zeta=(\xi,\mathrm{y}), $ with $ \mathrm{y} \in\mathbb R, $  to denote an element in the anisotropic space $ \mathbb R^{{n+1}} $ equipped with the norm

\begin{equation}\label{eq31}
\Phi(\zeta)=\sqrt{H^2(\xi)+\mathrm{y}^2}
\end{equation} and the variable $ z=(x,y), \, y \in\mathbb R,  $ for a point in the dual space $ \mathbb R^{{n+1}}, $ which turns out to be endowed with

\begin{equation*}
\Phi_0(z)=\sqrt{H_0^2(x)+y^2}.
\end{equation*}Let us point out that $ \Phi $ is a Finsler norm in $ \mathbb R^{{n+1}}, $ and in particular, it is straightforward to verify that \eqref{ineq28} jointly with the standard Schwarz inequality yield

\begin{equation}\label{ineq32}
\langle z, \zeta \rangle \leq \Phi_0(z) \Phi(\zeta)
,\qquad
 \forall z, \zeta \in \mathbb R^{n+1}.
\end{equation}  The energy functional 

$$ \int_{\mathbb R_+^{n+1}} y^\alpha  \Phi^2(\nabla u(z))\,dz $$ 
appearing in the right hand side of \eqref{ineq10}, is associated with the (weighted) $\Phi$-anisotropic Laplacian 

\begin{equation*} 
\Delta_{\Phi, \alpha} u(z) :=  \operatorname{div}\big(y^\alpha \,\Phi(\zeta) \,\nabla\Phi(\zeta)\bigr)\bigg|_{\zeta=\nabla u(z)} = y^\alpha  \Delta_H u(z)
+  y^{\alpha} u_{yy}(z) + \alpha y^{\alpha-1} u_{y}(z),
\end{equation*} where we denote by $  \operatorname{div}  $ the divergence operator with respect to the variables $ z=(x, y), $ that is 
$ \operatorname{div} \mathrm{F}:= \sum_{i=1}^n \frac{\partial F_i}{\partial x_i}+ \frac{\partial F_{n+1}}{\partial y}\, $ for a smooth vector field $  \mathrm{F}:z \mapsto (F_1(z),\ldots, F_{n+1}(z)). $ 
\\ \\
\subsection{Ground state}\label{subsection.2.2}
As already mentioned, a pivoting role in proving Theorems \ref{Theorem1},\ref{Theorem2}, will play the solution $ \psi $ (normalized up to a multiplicative constant) of the Euler-Lagrange 
equations associated to the interpolation inequality  \eqref{ineq10},

\begin{numcases}{\label{eq33}} 
\Delta_{\scriptscriptstyle \Phi,\alpha}\psi(z) + \frac{( \alpha + b-2)^2}{4}\frac{y^\alpha\psi(z)}{\Phi^2_0(z)} =0, 
\quad{z\in\mathbb R}_+^{n+1}, 
\label{eq33a}
\\
\lim\limits_{y\rightarrow 0^+} y^\alpha \frac{\partial \psi(x, y)}{\partial y}=-K(n, \alpha, b)\frac{\psi(x,0) }{H_0^{1-\alpha}(x)}
, \quad x \in \mathbb R^n\setminus\{0\}.
\label{eq33b}
\end{numcases} 
To simplify the notation, we will hereafter use the abbreviations

$$ \beta:=\frac{b - n - 1}{2}<0, \qquad\gamma:=\frac{2-\alpha- b}{2}\leq 0. $$
Looking at the special structure of problem \eqref{eq33}, we introduce the variables

\begin{equation} \label{eq34}
\rho(x,y) := \Phi_0(x, y)=\sqrt{H_0^2(x)+ y^2},\qquad
t(x,y) := \frac{y}{H_0(x)},
\end{equation} 
and we look for the solution in the form

$$
\psi(z)=\Psi\bigl(\rho(z), t(z), H_0(x)\bigr),
\qquad
\Psi(\rho, t, H_0)=\rho^\gamma\, H_0^\beta \, B(t).
$$
We directly check that

\begin{eqnarray}
\psi_{y}(z)
& =& 
 \frac{y}{\rho} \Psi_{\rho}
+
 \frac{1}{\phi} \Psi_{t},
\qquad
  \psi_{yy}(z)
\,=\,
 \frac{y^2}{\rho^2} \Psi_{\rho\rho}
+
 \frac{1}{H_0^2} \Psi_{tt}
+
 2\frac{t}{\rho} \Psi_{\rho t}
+ \, \Psi_{\rho}\left(  \frac{1}{\rho} -  \frac{y^2}{\rho^3}\right)
\label{eq35}
\\
\nabla_{\!  x}\psi(z) &=& \left(\frac{H_0}{\rho} \Psi_{\rho}  -\frac{t}{H_0} \Psi_{t} + \Psi_{H_0}  \right) \nabla H_0(x), \qquad \label{eq36}
\end{eqnarray}
and by \eqref{eq18}, \eqref{eq19},  \eqref{eq25}, \eqref{eq29} and \eqref{eq36}, we obtain that

\begin{equation*} 
H\bigl( \nabla_{\! x}\psi\bigr) \nabla_{\!\xi} H\bigl( \nabla_{\! x}\psi\bigr)
=
\left( \frac{1}{\rho} \Psi_{\rho}  -\frac{t}{H_0^2} \Psi_{t} + \frac{1}{H_0}\Psi_{\scriptscriptstyle H_0} \right) x.
\end{equation*}
Therefore by \eqref{eq27}, \eqref{eq35},  
and the facts that  $\, H_0^2+y^2=\rho^2 \,$ and $ \Delta_{\scriptscriptstyle \Phi,a}=  \alpha y^{\alpha-1}  \frac{\partial}{\partial y}  +  y^{\alpha} \frac{\partial^2}{\partial y^2} + y^\alpha\Delta_H, \, $   \eqref{eq33a} leads to the equation in the new variables \eqref{eq34}, 

\begin{eqnarray*} \Psi_{\rho \rho} + \frac{\rho^2}{H_0^4} \Psi_{tt} + \Psi_{\scriptscriptstyle H_0H_0} +  \frac{2 H_0}{\rho} \Psi_{\rho \scriptscriptstyle H_0} - \frac{2 t}{H_0}\Psi_{t\scriptscriptstyle H_0} +  \frac{n+\alpha}{\rho} \Psi_\rho + \left(  \frac{(3 - n) t}{H_0^2} + \frac{\alpha}{y H_0} \right) \Psi_t + \frac{n-1}{H_0}  \Psi_{\scriptscriptstyle H_0}
+ \frac{\gamma^2}{\rho^2} \Psi =0, \end{eqnarray*}
whence upon substituting $ \Psi, $ making straightforward computations and a normalization, we conclude that problem \eqref{eq33} admits a  solution of the form \begin{eqnarray}\label{eq37}
 \psi(z)&=& \Phi_0^\gamma(z) H_0^\beta(x) B\bigl(\frac{y}{H_0(x)}\bigr),\quad  x \in {\mathbb R}^n,\quad y\geq 0,\quad  z =  (x, y)\neq (0,0),
\end{eqnarray}
where $ B:[0,\infty) \mapsto \mathbb R$ satisfies the following boundary conditions problem

\begin{numcases}{\label{eq38}}
(t+t^3)\,B''(t)+\left[(4-b)t^2+\alpha\right]\, B'(t) + 
\frac{\beta (n+b-5) }{2}\, t \, B(t)=0,\quad  t>0, \label{eq38a} 
\\
B(0)=1,
\label{eq38b}
\\
\lim_{t\rightarrow\infty}t^{-\beta}B(t) \in\mathbb R. \label{eq38c}
\end{numcases} 
We point out that the initial condition \eqref{eq38b} is a normalization, which plays no essential role in the subsequent analysis. On the other hand, condition \eqref{eq38c} leads to a solution of \eqref{eq38} with the less possible singularity; see Appendix.  Notice also that $ \psi(x, y) $  is well defined in the set $\{(x,y): x=0,\, y> 0\}, $  due to the condition at the infinity. As for the notation, let us emphasize the dependence of $ B $ on $ n, \alpha, b, $ as we will see below, yet suppressing this dependence, for the sake of simplicity. 

In the specific case $ n=3 $ with $ \alpha = 0 $ and $  b=2, $  problem  
(\ref{eq38}) is solved explicitly and we have 
$ B(t) = 1-\frac{2}{\pi} \arctan(t). $ For the general case, we use the  transformation  $ \mathrm{z}=-t^2,\, $ which maps the regular singular points $\pm i,\, 0,\, \infty $ to $ \, 1,\, 0,\, \infty, $ respectively. Then, setting $\omega( \mathrm{z}) = B(t),\, $ 
problem (\ref{eq38}) is transformed to the following boundary conditions problem for the hypergeometric equation

\begin{numcases}{\label{eq39}}
\mathrm{z}\, (1- \mathrm{z}) \,\frac{d^2 \omega}{d  \mathrm{z}^2} \, +\, \left[ \frac{\alpha+1}{2} - \frac{5 - b}{2} \,  \mathrm{z} \right] \, \frac{d \omega}{d  \mathrm{z}} +  \frac{\beta (5-n-b)}{8} \,\omega( \mathrm{z})=0, \; \;
 -\infty <  \mathrm{z} < 0, \label{eq39a}
\\
\omega(0) = 1, \label{eq39b} 
\\
\lim_{ \mathrm{z} \rightarrow -\infty}(- \mathrm{z})^{-\beta/2}\omega( \mathrm{z}) \in\mathbb R.  \label{eq39c}
\end{numcases} 
The study of \eqref{eq39} for deriving the required properties of $ B,  $ is rather technical, due 
to the dependence on several relations of the parameters $\alpha, b, n. $ For readability's sake, we state here those properties of $ B $ that we need for the proof of Theorems \ref{Theorem1}-\ref{Theorem3}, deferring their complete proof until the Appendix; also, cf. \cite[Lemma 1]{Tzirakis.Improving}.

For notational convenience of the statements, we write in the sequel, $ g \sim h $ for two real functions $ g, h $ of the variable $ t $ (resp. $\, z$)  to mean that $ c_1 \,g  \leq h  \leq  c_2\, g, \,  $ for some positive constants $ c_1, c_2 $ independent of $ t$ (resp. $\,z$).  

It turns out (see Appendix) that problem \eqref{eq38} has a positive decreasing solution $ B, $ such that

\begin{equation}\label{eq40}
B  \sim   (1+t^2)^{\beta/2}\qquad\mbox{and}
\qquad B'  \sim  -t^{-\alpha} (1+t^2)^{\frac{\alpha+\beta-1}{2}},\qquad \forall t>0, 
\end{equation} with

\begin{equation}\label{eq41} 
 tB'-\beta B(t)=O(t^{\beta-2}),\quad\mbox{as}\;\; t \rightarrow\infty. 
\end{equation} We also have

\begin{equation}\label{eq42}
 \lim\limits_{t\rightarrow 0^+} t^\alpha B'(t) = - K(n,\alpha,b),
\end{equation} where  $ K(n, \alpha, b) $ is given in \eqref{eq11}. 

In view of \eqref{eq27}, and noting that \begin{equation}\label{eq43}
 \nabla\Phi_0(z)= \frac{1}{\Phi_0(z)}(H_0(x) \nabla H_0(x), \,y), 
\end{equation} and \begin{equation}\label{eq44}
 \nabla\psi=(M_1\nabla H_0,\, M_2), 
\end{equation} for the function $ \psi $ defined in \eqref{eq37},  
where 

$$
M_1:=\gamma  \Phi_0^{\gamma-2}  H_0^{\beta+1} B + \beta \Phi_0^\gamma H_0^{\beta - 1}  B  - \Phi_0^\gamma H_0^{\beta-2}  B' y,\;  
\qquad
 M_2:= 
 \gamma \Phi_0^{\gamma-2}  H_0^\beta   B y  
 + \Phi_0^\gamma H_0^{\beta-1} B', 
$$ it is straightforward to verify that 

\begin{equation}\label{eq45}
\langle \nabla \psi, z\rangle = \frac{1-\alpha-n}{2}\psi(z),\quad \forall z\in\mathbb R^{{n+1}}_+\setminus\{0\}.
\end{equation}As a consequence of   \eqref{eq40}-\eqref{eq42}, \eqref{eq45}, we get the following  uniform asymptotic behavior of the ground state $ \psi.$ 
\begin{lemma}\label{lemma1}
For the function $ \psi $ defined in \eqref{eq37}-\eqref{eq38}, there holds

\begin{equation}\label{rel46}
\psi  \sim  \Phi_0^{-\frac{n+\alpha-1}{2}},\;\,\mbox{in}\;\; \mathbb R_+^{n+1}.
\end{equation} Moreover, if $ \alpha \in (-1, 0], $ then $ \Phi (\nabla \psi ) \sim  \Phi_0^{-\frac{n+\alpha + 1}{2}}, $ and if $ \alpha \in (0, 1), $ then $ \Phi (\nabla \psi )   \sim \Phi_0^{-\frac{n + 1 - \alpha}{2}} y^{-\alpha},\,$ in $ \mathbb R_+^{n+1}. $
\end{lemma}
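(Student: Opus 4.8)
The plan is to read off the asymptotics of $\psi$ and $\Phi(\nabla\psi)$ directly from the explicit form \eqref{eq37} together with the bounds \eqref{eq40}--\eqref{eq42} on $B$, using the equivalence of the Finsler norms with the Euclidean ones \eqref{ineq21}, \eqref{ineq24}. First I would record that, by \eqref{eq34}, $t=y/H_0(x)$ satisfies $1+t^2=\rho^2/H_0^2=\Phi_0^2(z)/H_0^2(x)$, so that the first estimate in \eqref{eq40} gives $B(y/H_0(x))\sim \bigl(\Phi_0(z)/H_0(x)\bigr)^{\beta}$. Substituting into \eqref{eq37} and recalling $\gamma+\beta=\frac{2-\alpha-b}{2}+\frac{b-n-1}{2}=\frac{1-n-\alpha}{2}$, the powers of $H_0$ cancel and one obtains $\psi\sim\Phi_0^{\gamma+\beta}=\Phi_0^{-\frac{n+\alpha-1}{2}}$, which is \eqref{rel46}. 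Since $\beta<0$ this is uniform on all of $\mathbb R^{n+1}_+$, including the degenerate ray $x=0$, where condition \eqref{eq38c} guarantees the limit exists.

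For the gradient I would use the decomposition \eqref{eq44}, $\nabla\psi=(M_1\nabla H_0,M_2)$, so that by \eqref{eq31}, \eqref{eq26} (which gives $H(\nabla H_0(x))=1$), one has $\Phi^2(\nabla\psi)=H^2(M_1\nabla H_0)+M_2^2=M_1^2+M_2^2$ — here I would note that \eqref{eq18}, \eqref{eq19} let me pull the scalar $M_1$ out of $H$ (up to sign) and then apply \eqref{eq25}. It then suffices to estimate $M_1$ and $M_2$ separately. Writing everything in terms of $\rho=\Phi_0$, $H_0$ and $t$, one has $B\sim(1+t^2)^{\beta/2}$ and $B'\sim -t^{-\alpha}(1+t^2)^{(\alpha+\beta-1)/2}$; combining the three terms in $M_1$ and the two in $M_2$, and again using $1+t^2=\rho^2/H_0^2$ and $y=tH_0$, each monomial reduces after cancellation to a constant times a power of $\rho=\Phi_0$ times, in the terms coming from $B'$, a factor $t^{-\alpha}$, i.e. $(y/H_0)^{-\alpha}$. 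The upshot is that the $B$-terms contribute at order $\Phi_0^{-(n+\alpha+1)/2}$ while the $B'$-terms contribute at order $\Phi_0^{-(n+\alpha+1)/2}\,t^{1-\alpha}$ near $t=0$ and $\Phi_0^{-(n+\alpha+1)/2}$ for $t$ bounded away from $0$. When $\alpha\le 0$ the factor $t^{1-\alpha}$ is bounded near $t=0$, so the $B$-terms dominate and $\Phi(\nabla\psi)\sim\Phi_0^{-(n+\alpha+1)/2}$; when $\alpha\in(0,1)$ the $B'$-contribution $\Phi_0^{-(n+\alpha+1)/2}t^{-\alpha}=\Phi_0^{-(n+1-\alpha)/2}H_0^{\alpha}y^{-\alpha}\cdot(\text{bounded})$ blows up as $y\to0$ and is the dominant term, giving $\Phi(\nabla\psi)\sim\Phi_0^{-(n+1-\alpha)/2}y^{-\alpha}$. (Relation \eqref{eq45}, i.e. $\langle\nabla\psi,z\rangle=\frac{1-\alpha-n}{2}\psi$, together with \eqref{rel46} and the Schwarz-type inequality \eqref{ineq32}, furnishes the matching lower bound $\Phi(\nabla\psi)\ge c\,\Phi_0^{-1}|\psi|\sim\Phi_0^{-(n+\alpha+1)/2}$ in the case $\alpha\le 0$ directly, which is a convenient shortcut.)

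The main obstacle is the bookkeeping in the second part: one must be careful that no cancellation occurs among the leading-order terms of $M_1$ (there are three of them, with coefficients $\gamma$, $\beta$, and the $B'$-term) — since $\gamma\le 0$ and $\beta<0$ have the same sign and $B'<0$, the first two terms of $M_1$ have the same sign, so their sum is genuinely of the stated order and not accidentally smaller; similarly for $M_2$. One also has to verify the two regimes $t\to0$ and $t\to\infty$ separately and check that the estimate \eqref{eq41} is exactly what is needed to control the behaviour as $t\to\infty$ (i.e. as $x\to0$ with $\rho$ fixed), ensuring that the claimed power of $\Phi_0$ is attained uniformly and that no worse singularity appears along $x=0$. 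Once these sign and regime checks are in place, the conclusion follows by collecting the dominant power in each of the two cases $\alpha\in(-1,0]$ and $\alpha\in(0,1)$.
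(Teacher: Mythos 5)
Your proposal is correct and follows essentially the same route as the paper: read \eqref{rel46} off \eqref{eq37} and \eqref{eq40}, bound $\Phi(\nabla\psi)$ from below via \eqref{eq45}, \eqref{ineq32}, \eqref{rel46}, and otherwise compute $\Phi^2(\nabla\psi)$ from \eqref{eq44} with \eqref{eq25} and apply \eqref{eq40}--\eqref{eq42}. The only cosmetic differences are a slip in citing \eqref{eq26} instead of \eqref{eq25} (which you immediately correct) and your sign-based discussion of possible cancellations, which the paper sidesteps by writing $M_1^2+M_2^2$ as the manifestly nonnegative combination $\gamma(\gamma+2\beta)\Phi_0^{2\gamma-2}H_0^{2\beta}B^2+\Phi_0^{2\gamma}H_0^{2\beta-2}B'^2+\Phi_0^{2\gamma}H_0^{2\beta-2}\left[tB'-\beta B\right]^2$.
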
\begin{proof}
The asymptotics \eqref{rel46} is a direct consequence of the definition \eqref{eq37} and the asymptotics of $ B $ in \eqref{eq40}. Then \eqref{eq45}  jointly with \eqref{ineq32} and \eqref{rel46} yields $ \Phi (\nabla \psi(z))   \geq c \,\Phi_0(z)^{-\frac{n+1 + \alpha}{2}}\, $ in $\mathbb R_+^{n+1}, $ for some constant $ c>0, $ independent of $ z. $ Moreover, in view of \eqref{eq25} and \eqref{eq44}, we have, for $  t=y / H_0(x), $ 

$$ \Phi^2(\nabla\psi(z)) = \gamma \left( \gamma+2\beta \right) \Phi_0^{2\gamma-2}(z) 
 H_0^{2\beta}(x)  B^2(t) + \Phi_0^{2\gamma}(z) H_0^{2\beta-2}(x) B'^2(t)  +  \Phi_0^{2\gamma}(z) H_0^{2\beta-2}(x) \left[t B'(t)  -\beta B(t) \right]^2 $$
whereafter  the asymptotics of $ \Phi(\nabla\psi) $ results upon an application of \eqref{eq40}-\eqref{eq42}.
\end{proof}
\section{Sharp interpolation}\label{section.3}
This Section is devoted to the proof of Theorem \ref{Theorem1}. We retain the notation introduced in the previous section, and let us introduce some further notation: We denote the unit ball, with respect to the $ H_0$-norm in $ \mathbb R^n, $ by $\,\mathcal{W}_1:=\{x\in \mathbb R^n : H_0(x)\leq 1\}, $\, the so-called Wulff shape (or equilibrium crystal shape) of $ H_0,  $ centered at the origin. 
 We will also follow the usual convention of denoting by $ c $ or $ C $  a general positive constant, possibly varying from line to line. Relevant dependencies on parameters will be emphasized by using parentheses or subscripts.  In particular,  we set $ \omega_{{\scriptscriptstyle H},n}:=P_{H_0}(\mathcal{W}_1;\mathbb R^n)=n|\mathcal{W}_1|,  $  the anisotropic $H$-perimeter of $ \mathcal{W}_1. $

A key tool for the proof of Theorem 
\ref{Theorem1} is the following generalization of Picone's identity in the Finsler setting,

\begin{eqnarray}\label{eq47}
y^\alpha f(u,\psi)=y^\alpha \Phi^2(\nabla u)+\frac{u^2}{\psi} \Delta_{\Phi, \alpha}\psi -\operatorname{div}(y^\alpha \frac{u^2}{\psi} \Phi(\nabla\psi)\nabla_{\!\zeta} \Phi(\nabla\psi))
\end{eqnarray} where

\begin{equation}\label{eq48}
f(u,\psi)  
:=
\Phi^2(\nabla u) 
+
\frac{u^2}{\psi^2} \Phi^2(\nabla\psi) 
-2\frac{u}{\psi}\bigl\langle \nabla u , \Phi(\nabla\psi)\nabla_{\! \zeta} \Phi(\nabla\psi)\bigr\rangle,
\end{equation} which can be easily checked by straightforward differentiation and taking into account \eqref{eq20} and the expression 

\begin{equation*}
\Phi(\nabla\psi) \nabla_{\!\zeta} \Phi(\nabla\psi)=(H(\nabla_{\! x}\psi)\nabla_{\!\xi} H(\nabla_{\! x}\psi),\, \psi_y).
\end{equation*}
Moreover, by \eqref{eq18}, \eqref{eq19},  \eqref{eq26}, \eqref{ineq32} is easily seen that 

\begin{equation}\label{eq49} 
f(u,\psi) 
\geq
\Phi^2(\nabla u) + \Phi^2\bigl(\frac{u}{\psi}\nabla\psi\bigr) - 2 \Phi(\nabla u) \Phi\bigl(\frac{u}{\psi}\nabla\psi\bigr)
= \big(\Phi(\nabla u) 
-
 \Phi\bigl(\frac{u}{\psi}\nabla\psi\bigr)\big)^2
\geq 0.
\end{equation}
 We are now ready to proceed with the
\begin{proof}
[\textbf{Proof of Theorem 
\ref{Theorem1}}]
We begin with the proof of \eqref{ineq10}, which by standard approximation with smooth cutoff functions, it suffices to be proved for $ u \in C_0^\infty(\mathbb R^{n+1} \setminus \{0\} ). $ 
Then, we integrate  \eqref{eq47}, 
apply the divergence theorem to the last term, and use equations \eqref{eq33}, to obtain
\begin{flalign} 
\int_{\mathbb R_+^{n+1}} y^\alpha f(u, \psi) \,dz
&=\int_{\mathbb R_+^{n+1}}  \,y^\alpha \,  \Phi^2(\nabla u)  \,dz +
\int_{\mathbb R_+^{n+1}}  \,\frac{u^2}{\psi}
\Delta_{\Phi, \alpha}\psi\, dz + \int_{ \mathbb R^n}  \, \lim\limits_{y\rightarrow 0^+} y^\alpha \,\frac{\partial \psi(x, y)}{\partial y} \frac{u^2}{\psi} \,dx \nonumber \\&=
\int_{\mathbb R_+^{n+1}}  \, y^\alpha \, \Phi^2(\nabla u)  \,dz-\frac{( \alpha + b - 2 )^2 }{4}
\int_{\mathbb R_+^{n+1}} \frac{y^\alpha\,u^2(z)}{ \Phi_0^2(z)} \,dz-K(n, \alpha, b)
\int_{\mathbb R^n}   \frac{u^2(x,0)}{H_0^{1-\alpha}(x)} \,dx. \,\;
\label{eq50}
\end{flalign}Let us here point out that on $ supp\, u, $ the function $ \psi $ is smooth and uniformly bounded by some positive constant. Hence the integrand $ u / \psi $ has not singularity at the origin; actually  $ u / \psi \in C^\infty_c(\overline{\mathbb R_+^{n+1}}\setminus\{0\}). $ 

Then, in view of \eqref{eq49}, we conclude that 

\begin{eqnarray*} 
K(n,\alpha,b)
\int_{\mathbb R^n } \frac{u^2(x, 0)}{ H_0^{1-\alpha}(x)} \,dx
+
\frac{ ( \alpha + b - 2 )^2 }{4}
\int_{ \mathbb R_+^{n+1} } 
\frac{y^\alpha\, u^2(z)}{\Phi_0^2(z)} \,dz
\leq \int_{ \mathbb R_+^{n+1} } y^\alpha \, \Phi^2(\nabla u)  \,dz,
\end{eqnarray*}
for all $  u\in C^\infty_0(\mathbb R^{n+1}\setminus \{0\}) $ and therefore by approximation, for all $u\in C^\infty_0(\mathbb R^{n+1}). $ 

In order to verify the optimality of the constant $K(n, \alpha, b), $ we define in $  D^{1,2}( {\mathbb R}^{n+1}_+,\, y^\alpha\,dz) $ the quotient

\begin{equation*}
Q[u]:=\frac{\int_{ \mathbb R_+^{{n+1}} } y^\alpha \, \Phi^2(\nabla u)\,dz 
-\frac{(\alpha + b -2)^2}{4}
\int_{ \mathbb R_+^{{n+1}} } \frac{y^\alpha u^2(z) }{ \Phi_0^2(z)}\,dz}{\int_{\mathbb R^n} \frac{u^2(x, 0) }{H_0^{1-\alpha}(x)}\,dx}=\frac{N[u]}{D[u]},
\end{equation*}
and we will show that there exist functions $u_\epsilon \in D^{1,2}(  {\mathbb R}^{n+1}_+,\, y^\alpha\,dz) $  
such that $ \lim\limits_{\epsilon\rightarrow 0^+}
Q[u_\epsilon]= K(n,\alpha,b). $

To this end, we consider the $\Phi_0$-cylinders $ {\mathcal C}_1:=\{(x, y)\in {\mathbb R}^{n+1}  : H_0(x) <  1,\, |y| < 1 \},\;  {\mathcal C}_2:=2\mathcal C_1, $ and a smooth cutoff function $\eta\in C^1_0({\mathcal C}_2), $ such that 
$\eta\equiv 1 $ in $ {\mathcal C}_1. $
We then take the following approximations to the ground state $\psi $ (see \eqref{eq37})
\begin{eqnarray*}
u_\epsilon(x, y)= \begin{cases} \eta(z)\,\psi(x, y), &  y\geq \epsilon,  \\ \eta(z)\,\psi(x,\epsilon) , & 0\leq y \leq  \epsilon. \end{cases} \end{eqnarray*}
Let us next determine the leading asymptotic behavior of  $ D[u_\epsilon] $ and $ N[u_\epsilon], $ as $ \epsilon\to 0. $  For both terms we will employ the ``$H_0$-polar coordinates'': $ x = r  \mathrm{w}, \, $ with  $ r = H_0(x), \, \mathrm{w}\in \partial\mathcal{W}_1. \, $ As regards  the denominator $ \, D[u_\epsilon], $ we split the integration ``near'' and ``away'' the singularity, and in view of \eqref{eq37} together with the asymptotics of $ B $ in \eqref{eq40}, we have

\begin{eqnarray} 
 D[u_\epsilon]
&=&
 \int_{\mathcal{W}_1}\frac{\psi^2(x,\epsilon) }{H_0^{1-\alpha}(x)}  \;dx 
+
\int_{\mathcal{W}_2\setminus\mathcal{W}_1}
\!\!\!
\frac{\eta^2(x, 0)\,\psi^2(x,\epsilon)}{H_0^{1-\alpha}(x)}  \;dx
  = 
\omega_{{\scriptscriptstyle H}, n}
\int_{0}^1 \left(1 + \frac{\epsilon^2}{r^2}\right)^\gamma B^2\left(\frac{\epsilon}{r}\right)\frac{1}{r}\,dr + O(1)
\nonumber\\
&=& 
\omega_{{\scriptscriptstyle H}, n}
 \int_\epsilon^{\infty} (1+s^2)^\gamma B^2(s)\frac{1}{s} \, ds + O(1), \qquad\mbox{as $\epsilon\rightarrow 0^+. $ } \label{eq51}
\end{eqnarray}We proceed now to estimate the numerator $ N[u_\epsilon]. $ First, notice that in view of \eqref{eq37} together with the asymptotics of Lemma \ref{lemma1}, it is straightforward to check that away from the singularity,

\begin{equation}\label{eq52} 
\int_{ {\mathcal C}_2\setminus \mathcal{C}_1   }
y^\alpha 
\bigg( 
\Phi^2(\nabla u_\epsilon)
- \gamma^2
 \frac{  u_\epsilon^2(z) }{\Phi_0^2(z)}
\bigg)\, dz
= 
O(1),\quad\mbox{as}\;\;\epsilon\to 0.
\end{equation}Moreover, using \eqref{eq25} and \eqref{eq40}, we get, for some constant $ C_{{\scriptscriptstyle H},n,a,b}>0, $ independent of  $ \epsilon, $

$$ \int_0^\epsilon y^\alpha 
\int_{ \mathcal{W}_1 } \Phi^2(\nabla u_\epsilon) \, dx\,dy \leq  C_{{\scriptscriptstyle H},n,a,b} I_1(\epsilon) \, + \, I_2(\epsilon), $$ 
where \begin{eqnarray*}
I_1(\epsilon)
 := \int_0^\epsilon y^\alpha 
\int_{ \mathcal{W}_1 } 
\Phi_0^{2\gamma+2\beta-2}(x,\epsilon)
 \, dx\,dy  = \frac{ \epsilon^{\alpha +1} }{  \alpha + 1 } 
\, \omega_{{\scriptscriptstyle H},n} 
\int_0^1 
\frac{r^{n-1}}{( r^2 + \epsilon^2 )^\frac{n+\alpha+1}{2}}
\, dr 
 = 
O(1), \;\;\;\mbox{as} \;\; \epsilon\rightarrow 0, \end{eqnarray*}
as well as, for some positive constants  $ C_1, C_2, $ independent of  $ \epsilon, $

\begin{eqnarray}\label{eq53}
I_2(\epsilon)& := &
\int_0^\epsilon y^\alpha 
\int_{ \mathcal{W}_1 } \Phi_0^{2\gamma}(x,\epsilon)
H_0^{2\beta-2}(x)
\Big[ \frac{ \epsilon }{H_0(x)}
B'\bigl(\frac{ \epsilon }{H_0(x)}\bigr)
-\beta B\bigl(\frac{ \epsilon }{H_0(x)}\bigr)
\Big]^2  \, dx\,dy
\nonumber \\
&=& \frac{\omega_{{\scriptscriptstyle H},n}}{\alpha+1} 
\int_\epsilon^\infty \big(1+s^2)^{\gamma}
s^\alpha  \Big[s B'(s) - \beta B(s) \Big]^2 \, ds \leq C_1  \int_\epsilon^1  s^\alpha\,ds + C_2 
\int_1^\infty s^ {\alpha+2\beta-4} \, ds 
=O(1), 
\end{eqnarray}
as $ \epsilon  \to 0,\, $ where, in addition, we used \eqref{eq41}. We may also exploit the asymptotics \eqref{eq40}-\eqref{eq41}, to show that

\begin{equation}\label{eq54}
\int_0^\epsilon\int_{ \mathcal{W}_1 }
y^\alpha 
 \frac{  u_\epsilon^2(z) }{\Phi_0^2(z)}\, dx\,dy
= 
O(1),\quad\mbox{as}\;\;\epsilon\to 0.
\end{equation}
From \eqref{eq52}-\eqref{eq54}, we conclude that

\begin{equation}\label{eq55}
N[u_\epsilon]
= \int_\epsilon^1 
\int_{\mathcal{W}_1}
  y^\alpha \bigg( 
 \, \Phi^2(\nabla  \psi)  - \gamma^2\frac{  \psi^2(x, y)}{\Phi_0^2(x, y)} \bigg) \,dx\, dy
\,+\,
 O(1),\quad\mbox{as}\;\;\epsilon\to 0.
\end{equation}
To estimate the integral in the right hand side of (\ref{eq55}), we use (\ref{eq25}) and (\ref{eq44}),  then we employ again the ``$H_0$-polar coordinates'': $ x = r  \mathrm{w}, \, $ with  $ r = H_0(x), \, \mathrm{w}\in \partial\mathcal{W}_1,\, $ and make the change of variable $ s=y/r, $ to deduce that

\begin{multline}\label{eq56}
\int_\epsilon^1 \int_{\mathcal{W}_1} y^\alpha \bigg(\Phi^2(\nabla  \psi)  - \gamma^2\frac{  \psi^2(x, y)}{\Phi_0^2(x, y)} \bigg)\,dx\, dy =\; \omega_{{\scriptscriptstyle H}, n} \int_\epsilon^1 \frac{1}{y} \int_y^\infty s^\alpha (1+s^2)^\gamma \Big[\beta^2 \,B^2(s) \, + \,2 \, \beta \,\gamma \, (1+s^2)^{-1} \,B^2(s) \\ +(1+s^2)\,B'^2(s) - 2\, \beta  \,s\, B(s)\, B'(s) \Big] \,ds\,dy. \end{multline}
We express the $ds$-integral in \eqref{eq56} as the limit,  for $ R\to\infty, $ of the sum

\begin{equation}\label{eq57}
\int_y^R
s^\alpha (1+s^2)^\gamma \big[
\beta^2 \,B^2(s) \,+\,2 \, \beta \,\gamma \, (1+s^2)^{-1} \,B^2(s)
\big]
\,ds\,   + \, I(y;R)
\end{equation} $$ I(y;R):=  \int_y^R
s^\alpha  (1+s^2)^{\gamma+1} \,B'^2(s) \,- \, 2 \, \beta s^{\alpha+1}\,\left(1+s^2\right)^{\gamma} \, B(s)B'(s)
\,ds, $$
and then we use integrations by parts to estimate the terms in $ I(y;R). $ More precisely, for the first term, we  multiply (\ref{eq38a}) by $ t^{\alpha-1}(1+t^2)^\gamma B, $ to get
\begin{equation}\label{eq58}
(s^\alpha (1+s^2)^{\gamma + 1}B'(s))' B(s) +  \frac{\beta (n+b-5)}{2}\, 
s^\alpha \, (1+s^2)^\gamma B^2(s)=0,\qquad s>0,
\end{equation} whereupon an integration by parts yields 

\begin{equation}\int_y^R s^\alpha  (1+s^2)^{\gamma+1} \,B'^2(s) \,ds=\frac{ \beta (n+b-5)}{2}
\, \int_y^R s^\alpha \, (1+s^2)^\gamma B^2(s) \,ds + \Big[s^\alpha\left(1+s^2\right)^{ \gamma+1}
B'(s)  B(s) \Big]_{s=y}^{s=R},\label{eq59}
\end{equation}
and for the second term, we have

\begin{flalign}
&\int_y^R
s^{\alpha+1}\,\left(1+s^2\right)^{\gamma} \, B(s) B'(s)
\,ds = \frac{1}{2}\int_y^R s^{\alpha+1}\,\left(1+s^2\right)^{\gamma}\left(B^{2}(s)\right)'
\,ds = \nonumber \\ &
-\frac{(\alpha+1)}{2}\int_y^R
s^\alpha \left(1+s^2\right)^{\gamma} B^2(s)
\,ds - \gamma \int_y^R s^{\alpha+2}  \left(1+s^2\right)^{\gamma-1} B^2(s)
\,ds \,+\, \frac{1}{2} \Big[s^{\alpha+1} \left(1+s^2\right)^{\gamma} B^2(s)\Big]^{s=R}_{s=y}.
 \label{eq60}
\end{flalign}
Substituting (\ref{eq59})-(\ref{eq60})  in (\ref{eq57}),  then letting $ R \to \infty, $ taking into account  \eqref{eq41}, and eventually substituting in \eqref{eq56}, it follows that

\begin{eqnarray}\label{eq61}
N[u_\epsilon]=
- \omega_{{\scriptscriptstyle H}, n}
\int_\epsilon^1 
y^{\alpha-1}\left(1+ y^2\right)^\gamma\,
B(y)\,
\Big[ 
B'(y) \,+\, y\, ( y\, B'(y)\,  - \beta \, B(y)) 
\Big]  \, dy 
+ O(1), \;\;\;\mbox{as}\; \epsilon\rightarrow 0.
\end{eqnarray}
In view of the asymptotics of $ B' $ in  (\ref{eq40}), the integrals in (\ref{eq51}) and (\ref{eq61}), are unbounded as $ \epsilon\to 0^+, $  then we apply L'H\^opital 's rule and use (\ref{eq38b}) and (\ref{eq42}), to conclude 

\begin{eqnarray*}
\lim\limits_{\epsilon\rightarrow 0^{\scriptscriptstyle+}}
Q[u_\epsilon]
&=&\lim\limits_{\epsilon\rightarrow 0^{\scriptscriptstyle+}}
\frac{
-\omega_{{\scriptscriptstyle H}, n}
\int_\epsilon^1 
 s^{\alpha - 1} \left(1+s^2\right)^\gamma\,
B(s)\,[ B'(s) + s ( s B'(s) - \beta  B(s))  ] \,ds + O(1)
}
{  {\omega_{{\scriptscriptstyle H}, n}}
\int_\epsilon^{\infty} (1+s^2)^\gamma B^2(s)\frac{1}{s}\,ds + O(1)}
\\
&=&
\lim\limits_{\epsilon\rightarrow 0^{\scriptscriptstyle+}}
\frac{ 
-\epsilon^{\alpha} \,(1+\epsilon^2)\,
B'(\epsilon)
}{B(\epsilon)} + \beta \epsilon^{ \alpha+1} =
K(n, \alpha, b).
\end{eqnarray*}
The proof of Theorem 
\ref{Theorem1} is now completed.
\end{proof} Notice that, in view of (\ref{eq48})-(\ref{eq50}), the best constant $ K(n, \alpha, \beta) $ in (\ref{ineq10}), is not attained in  $ D^{1,2}({\mathbb R}^{n+1}_+, y^\alpha dz). $ The fact that the best constant is not attained suggests that one might look for an improvement of (\ref{ineq10}), by adding a correction term in the least hand side. In the next section, we indeed establish $L^2$-improvements for the case of bounded domains. However, before proceed to this case, let us point out that there is no improvement of (\ref{ineq10}) in the usual $L^p$-sense; that is there are no constant $ C>0, $ nontrivial potential $ V \geq 0$ and exponent $ p > 0 $ such that

\begin{eqnarray*}
 C\left(\int_{ \mathbb R_+^{{n+1}} } V(z)|u(z)|^p\,dz\right)^\frac{2}{p}\leq
\int_{ \mathbb R_+^{{n+1}} } y^\alpha\,  \Phi^2(\nabla u)  \, dz
- K(n,\alpha, b)\int_{\mathbb R^{n}}
\frac{u^2(x,0)}{ H_0^{1-\alpha}(x)   }\,dx
 -  \frac{(\alpha + b -2)^2}{4}\int_{ \mathbb R_+^{{n+1}}} \frac{ y^\alpha\, u^2(z)}{  \Phi_0^2(z) }\,dz,
\end{eqnarray*} for all $ u  \in D^{1,2}({\mathbb R}^{n+1}_+,  y^\alpha dz). $ This fact can be demonstrated by taking the following perturbations of the ground state $ \psi $ defined in \eqref{eq37},
\begin{eqnarray*}
u_\varepsilon(z) = \begin{cases} 
\psi(z)  \Phi_0^{\varepsilon/2}(z), & \Phi_0(z) \leq 1,  
\\
\psi(z) \Phi_0^{-\varepsilon/2}(z), &   \Phi_0(z) \geq 1,
\end{cases} 
\end{eqnarray*} for $ \varepsilon > 0, $ and then showing that 

\begin{eqnarray*}
\frac{\int_{ \mathbb R_+^{{n+1}} } y^\alpha \Phi^2(\nabla u_\varepsilon) \, dz 
- K(n, \alpha, b)\int_{\mathbb R^{n}} \frac{u_\varepsilon^2(x,0)}{ H_0^{1-\alpha}(x)}\,dx
 -  \frac{(\alpha + b -2)^2}{4}\int_{ \mathbb R_+^{{n+1}} } \frac{  y^\alpha u_\varepsilon^2(z)}{\Phi_0^2(z) }\,dz  }
{\left(\int_{ \mathbb R_+^{{n+1}} } V(z)|u_\varepsilon|^p\,dz\right)^\frac{2}{p}}\xrightarrow{ \varepsilon \to 0} 0.
\end{eqnarray*} To prove this fact, we notice first that  

$$ \begin{cases} 
\Delta_{\scriptscriptstyle \Phi,\alpha}u_\varepsilon(z) + \big[
 \frac{( \alpha + b-2)^2}{4} - \frac{\varepsilon^2}{4}  
\big] \frac{y^\alpha u_\varepsilon(z)}{\Phi^2_0(z)} =0, 
&{z\in\mathbb R}_+^{n+1}, \; \Phi_0(z) \neq 1, \\
\lim\limits_{y\rightarrow 0^+} y^\alpha \frac{\partial u_\varepsilon(x,  y)}{\partial y}=-K(n, \alpha, b)\frac{u_\varepsilon(x, 0) }{H_0^{1-\alpha}(x)}, & x \in  \mathbb R^n\setminus\{0\}, \; H_0(x) \neq 1,
\end{cases} $$ as is seen by straightforward computations, as in Section \ref{subsection.2.2}. 
Moreover, from \eqref{eq18}, \eqref{eq19}, \eqref{eq25}, \eqref{eq27}, \eqref{eq29}, \eqref{eq45} we get that $$
\Phi(\nabla u_\varepsilon) 
\big\langle 
 \nabla_{\scriptscriptstyle\!\zeta} \Phi(\nabla u_\varepsilon),\,
 \nabla \Phi_0(z) 
\big\rangle  = \begin{cases} 
\frac{1-n-\alpha +\varepsilon}{2 \, \Phi_0(z)} \; u_\varepsilon, & \Phi_0(z) < 1,  \\ \frac{1- n -\alpha - \varepsilon}{2 \, \Phi_0(z)} \; u_\varepsilon, & \Phi_0(z) > 1, 
\end{cases}
$$
so taking into account the asymptotics  (\ref{rel46}), and noting  by \eqref{ineq21}, \eqref{eq25}, that $ 1=\Phi(\nabla\Phi_0)\sim |\nabla \Phi_0| $ we deduce that,  for some positive constant $ C_{{\scriptscriptstyle H},n,\alpha,b}, $  \begin{eqnarray*}
 \bigg|
\int_{(\partial W_R)^+ } 
y^\alpha \,u_\varepsilon \Phi(\nabla u_\varepsilon)
 \big\langle
 \nabla_{\scriptscriptstyle\!\zeta} \Phi(\nabla u_\varepsilon), \,
\frac{ \nabla \Phi_0(z)}{|\nabla\Phi_0(z)|}  \big\rangle \, d\sigma(z)
\bigg| \leq
  \frac{n+\alpha+\varepsilon-1}{2}\, C_{{\scriptscriptstyle H},n,\alpha,b} \, R^{-\varepsilon} 
 \;\xrightarrow{R\rightarrow \infty} \; 0, \end{eqnarray*} where $ d\sigma(z) $ denotes the $n$-dimensional Lebesgue integration over the upper half boundary $ (\partial W_{\!R})^+:=\{z=(x,y)\in\mathbb R^{n+1}: \Phi_0(z)=R,\, y>0\} $ of the Wulff shape $ W_{\!R}:=\{z\in\mathbb R^{n+1}: \Phi_0(z)=R\}. $ Using now \eqref{eq20}, then integrating by parts in the domains  $ W_1^+, \,W_R^+\setminus W_1^+, $ and letting eventually $ R\rightarrow \infty, $ it follows, from the above observations, that \begin{flalign*} 
&\int_{  \mathbb R_+^{n+1} } y^\alpha \,\Phi^2(\nabla u_\varepsilon) \, dz
 =  
 \int_{  \mathbb R_+^{n+1} } y^\alpha \, \Phi(\nabla u_\varepsilon) \,\langle \nabla u_\varepsilon, \nabla_{\scriptscriptstyle\!\zeta}\!\Phi(\nabla u_\varepsilon)\rangle\, dz  =  \\ &
K(n,\alpha, b)\int_{ \mathbb R^{n}}
\frac{u_\varepsilon^2(x,0)}{  H_0^{1-\alpha}(x)  }\, dx
+
\left[  \frac{(\alpha + b -2)^2 - \varepsilon^2}{4} \right]
 \int_{ \mathbb R_+^{n+1} } \frac{  y^\alpha\, u_\varepsilon^2(z)}{\Phi_0^2(z)}\, dz
+\varepsilon  \int_{  (\partial W_{1})^+ } \frac{y^\alpha 
\, \psi^2(z) }{ |\nabla \Phi_0(z)|}\, \,d\sigma(z).     
\end{flalign*}
Then, letting $\varepsilon \rightarrow 0, $ we get 
$$\int_{  \mathbb R_+^{{n+1}} } y^\alpha \Phi^2(\nabla u_\varepsilon) \, dz
-
K(n,\alpha, b)\int_{  \mathbb R^{n} }
\frac{u_\varepsilon^2(x,0)}{ H_0^{1-\alpha}(x)  }\, dx
 -  \frac{(\alpha + b - 2)^2}{4}
 \int_{  \mathbb R_+^{{n+1}} } \frac{  y^\alpha\,  u_\varepsilon ^2(z)}{\Phi_0^2(z)}\, dz
\longrightarrow 0. $$
\section{Sharp infinite series improvement}
\label{section.4} In this section we will give the proof of Theorem \ref{Theorem2}. For notational convenience, we will assume that $ D=1, $ without loosing the generality, since \eqref{ineq15}  is invariant under scaling as implied by \eqref{eq18}. A crucial role in the proof, will play the function $ \psi_k, $ defined by
\begin{equation}\label{eq62}
 \psi_k(z) \,=\, \psi(z)\, X_1^{-1/2}(\rho)\, X_2^{-1/2}(\rho)\cdots
  X_k^{-1/2}(\rho)
  \,=\,
  \psi(z) P_k^{-1/2}(\rho),\quad \rho = \Phi_0(z),
\end{equation}
where $ \psi $ is defined in \eqref{eq37}-\eqref{eq38}. Exploiting the differentiation formula

\begin{equation}\label{eq63}
X_i'(\rho) = \frac{1}{\rho} X_1(\rho) \cdots X_{i-1}\, X_i^2(\rho) \,= \, \frac{1}{\rho} \, P_i(\rho)\, X_i(\rho), \;\; i=1,\ldots , k,
\end{equation} hence
\begin{equation}\label{eq64}
P'_k(\rho) \,= \,\frac{1}{\rho}\, P_k(\rho)\, S_k(\rho), \quad\mbox{with}\quad S_k(\rho) := \sum\limits_{i=1}^k P_i(\rho),
\end{equation} 
 it can be shown that $ \psi_k $ satisfies the Euler-Lagrange equations associated to \eqref{ineq15},
\begin{numcases}{\label{eq65}}
 \Delta_{\scriptscriptstyle \Phi,\alpha} \psi_k(z) + \frac{(\alpha + b-2)^2}{4}\frac{y^\alpha \psi_k(z)}{  \Phi_0^2(z) }
+ \frac{y^\alpha\,\psi_k(z)}{4 \Phi_0^2(z)}\sum_{i=1}^k P^2_i(\rho) = 0, 
\qquad { z \in U^+}, 
\label{65a}
\\
 \lim\limits_{y\rightarrow 0^+} y^\alpha \frac{ \partial \psi_k(x, y)}{\partial y}
=
- K(n, \alpha, b) \frac{\psi_k(x,0)}{  H_0^{1-\alpha}(x)},
    \qquad\qquad  x \in U_0 \setminus\{0\}.
\end{numcases}
We are now ready to proceed with the 
\begin{proof}
[\textbf{Proof of Theorem \ref{Theorem2}}]
By standard approximation, it suffices to prove \eqref{ineq15} for  $ u \in C^\infty_0(U\setminus\{0\}). $ Let us then integrate \eqref{eq47},  apply the divergence theorem to the last term 
and finally use (\ref{eq65}), to obtain 

\begin{eqnarray} 
\int_{U^+}  y^\alpha   \, f(u, \psi_k) \,dz
&=&
\int_{U^+}  \, y^\alpha \, \Phi^2(\nabla u)  \,dz
-K(n, \alpha, b)
\int_{U_0}   \frac{u^2(x,0)}{H_0^{1-\alpha}(x)} \,dx
-\frac{( \alpha + b - 2 )^2 }{4}
\int_{U^+} \frac{y^\alpha\,u^2(z)}{\Phi_0^2(z)} \,dz
\nonumber
\\
& &-
\frac{1}{4}\sum_{i=1}^{k}
\int_{U^+} 
  \, y^\alpha \, 
\frac{P_i^2\,u^2(z)}{\Phi_0^2(z)} \,dz, 
\label{eq66}
\end{eqnarray} where $ f $ is given in \eqref{eq48}, and recall from \eqref{eq49} that $ f(u,\psi_k)\geq0.$ We then conclude that 

\begin{eqnarray*} K(n,\alpha,b)\int_{U_0} \frac{u^2(x, 0)}{H_0^{1-\alpha}(x)} \,dx + \frac{ ( \alpha + b - 2 )^2 }{4} \int_{U^+} \frac{y^\alpha u^2{(z)}}{ \Phi_0^2(z)} \,dz + \frac{1}{4}\sum_{i=1}^{k} \int_{U^+} \frac{y^\alpha P_i^2\,u^2(z)}{ \Phi_0^2(z)} \,dz \leq \int_{U^+} y^\alpha {\Phi^2(\nabla u) } \,dz,
\end{eqnarray*} for all $  u\in C^\infty_0(U\setminus \{0\}), $ therefore by approximation, for all $ u \in C^\infty_0(U), $ and eventually  taking the limit as $ k\rightarrow \infty, $ we obtain inequality \eqref{ineq15}.

Let us next verify the optimality of the constants $\frac{1}{4} $ for the remainder terms appearing in \eqref{ineq15}.  To this aim, we fix $ k \in \mathbb N, $ and setting $ \varepsilon = (\varepsilon_0, \varepsilon_1,\ldots,\varepsilon_k), $ where $\varepsilon_0 > 0, \, \varepsilon_1 > 0, \ldots, \varepsilon_k>0, $ we will show that there exist functions $u_{\varepsilon} \in D^{1,2}(U_+,  y^\alpha dz) $ such that

\begin{eqnarray*}
Q_k[u_{\varepsilon}]:=
 \frac{\int\limits_{U^+} y^\alpha \,  \Phi^2(\nabla u_{\varepsilon})  \, dz 
-K(n,\alpha, b )\int\limits_{U_0} \frac{u_{\varepsilon}^2(x,0)\, dx }{H_0^{1-\alpha}(x)} -\frac{ ( \alpha + b - 2 )^2 }{4}\int\limits_{U^+} \frac{  y^\alpha u_{\varepsilon}^2(z)}{\Phi_0^2(z)} \, dz-\frac{1}{4} \sum\limits_{i=1}^{k-1}\int\limits_{U^+}
\frac{  y^\alpha P_i^2  u_{\varepsilon}^2(z)}{ \Phi_0^2(z)}\, dz}
{\int\limits_{U^+} \frac{  y^\alpha \, P_k^2 u_{\varepsilon}^2(z)}{\Phi_0^2(z)} \,dz}
\xrightarrow{\varepsilon \to 0} \frac{1}{4}.
\end{eqnarray*} In view of the ground state representation (\ref{eq66}), we have \begin{eqnarray*} Q_k[u_{\varepsilon}]
= \frac{ \int_{U^+} y^\alpha  f(u_\varepsilon, \psi_k)\,dz }
{ \int_{U^+} \frac{ y^\alpha \,P_k^2 \, u_{\varepsilon}^2(z)  }{\Phi_0^2(z)} \,dz }
+ \frac{1}{4}, \end{eqnarray*} where  (see \eqref{eq48})
\begin{equation}\label{eq67}
 f(u_\varepsilon,\psi_k):=\Phi^2(\nabla u_\varepsilon) +\frac{u_\varepsilon^2}{\psi_k^2} \Phi^2(\nabla\psi_k) -2\frac{u_\varepsilon}{\psi_k}\langle \nabla u_\varepsilon, \Phi(\nabla\psi_k)\nabla_{\!\zeta} \Phi(\nabla\psi_k)\rangle,
\end{equation}  hence it is sufficient to show that there exist functions $  u_\varepsilon \in D^{1,2}(U_+,  y^\alpha dz) $ such that 

\begin{equation}
\label{68}
\frac{ \int_{U^+} y^\alpha f(u_\varepsilon, \psi_k)\,dz  }
{
\int_{U^+} 
\frac{ y^\alpha \,P_k^2 \, u_{\varepsilon}^2(z)}{ \Phi_0^2(z)} \,dz  
}
\xrightarrow{\varepsilon \to 0} 0.
\end{equation}
Let $ \delta>0 $ such that  $ W_{\delta} \subset U. $ We fix some radius $ 0 < R < \delta/2 $ and define the functions 

\begin{equation*}
u_{\varepsilon}(z)
 =  
 \psi_k(z)\, \upsilon_\varepsilon(\rho) \, \eta(z),
\qquad
\mbox{where}
\qquad
\upsilon_\varepsilon(\rho):=\rho^{\varepsilon_0}X_1^{\varepsilon_1}(\rho) \cdots X_k^{\varepsilon_k}(\rho),
\;\;\; \rho = \Phi_0(z),
\end{equation*}
and 
$\eta \in C_0^\infty(W_{2 R})$ with $ \eta\equiv 1 $ in $W_{R}. $ 
 Let us now show that $u_{\varepsilon} $ indeed satisfy condition 
\eqref{68}. 

We begin with the denominator in
\eqref{68}. In view of \eqref{rel46}, we decompose the integral into $ W_R^+$ and $ W_{2R}^+ \setminus W_R^+, $ and then apply ``$\Phi_0$-polar coordinates'' $ z=\rho\mathrm{w}, \,\rho=\Phi_0(z),\,\Phi_0(\mathrm{w})=1, $ to deduce that, for some positive constants $ c_{{\scriptscriptstyle H},n,\alpha,b},\, C_{{\scriptscriptstyle H},n,\alpha,b}, $ independent of $ \varepsilon, $ 

\begin{eqnarray*}
\int\limits_{U^+} 
\frac{ y^\alpha \, P_k^2\, u_{\varepsilon}^2(z)}{\Phi_0^2(z)} \,dz
 \geq 
c_{{\scriptscriptstyle H},n,\alpha,b} 
\int\limits_{ W_R^+ } \frac{
y^\alpha \, P_k \, \upsilon_\varepsilon^2
}{\Phi_0^{n+\alpha+1 }} \,dz 
\;  + 
\!\!\int\limits_{ W_{{2R}}^+\setminus  W_R^+ }
\!\!\!
 \frac{y^\alpha 
P_k^2  \,u^2_\varepsilon 
}{ \Phi_0^2  } \,dz
 = 
C_{{\scriptscriptstyle H},n,\alpha,b}
\int\limits_0^R \frac{P_k \,\upsilon_\varepsilon^2}{\rho}\,d\rho + O(1),\qquad\mbox{as}\,\; \varepsilon\to 0.
\end{eqnarray*} We then take successively the limits   
$\varepsilon_0 \to 0, \ldots, \varepsilon_{k-1}\to 0,  $ and use eventually the relation  (\ref{eq63}) to obtain 

\begin{eqnarray}
\label{eq69}
 \lim\limits_
 {\varepsilon_{k}\rightarrow 0 }\cdots 
\lim\limits_
 {\varepsilon_1\rightarrow 0 } 
 \lim\limits_
 {\varepsilon_0\rightarrow 0 }
 \int_{U^+} 
\frac{  y^\alpha \, 
P_k^2\, u_{\varepsilon}^2(z) 
}{ \Phi_0^2(z)} \, dz
&\geq &
C_{{\scriptscriptstyle H},n,\alpha,b}
\lim\limits_{\varepsilon_{k}\rightarrow 0 }
\int_0^R \frac{P_{k}(\rho) \, X_k^{ 2 \varepsilon_k}(\rho)}{\rho}\,d\rho + O(1)\nonumber\\&=& 
C_{{\scriptscriptstyle H},n,\alpha,b} \lim\limits_
 {\varepsilon_k\rightarrow 0 } \frac{1}{2\varepsilon_k}X_k^{2\varepsilon_k}(R)+O(1)
= +\infty.
\end{eqnarray} Next, we proceed to estimate the numerator in \eqref{68}. Exploiting the specific form of $ u_\varepsilon, $ let us first derive an explicit expression of the integrand $ f(u_\varepsilon, \psi_k), $ in $ W_R $ where $ \eta\equiv1. $ Taking into account the $ H_0$-norm's properties \eqref{eq18}, \eqref{eq25}, and utilizing (\ref{eq45}), 
(\ref{eq64}) together with the fact that (cf. (\ref{eq43}))

$$
\nabla (\upsilon_\varepsilon(\rho))
=
\frac{ \upsilon_\varepsilon(\rho)
J_\varepsilon(\rho)}{\rho^2}
 \,  \bigl(H_0(x) \nabla H_0(x),  y\bigr),
\quad\mbox{where}\;\;
J_\varepsilon:=\varepsilon_0
+ 
\sum_{i=1}^k \varepsilon_i \, P_i,
$$ 
 we deduce that, for $ z \in W_R, $ 

\begin{equation}\label{eq70}
\Phi^2(\nabla u_\varepsilon) =  \frac{u_\varepsilon^2 J_\varepsilon}{\rho^2}  
\big(  J_\varepsilon   +    1-a-n-S_k  \big) + \frac{u_\varepsilon^2}{\psi_k^2} \Phi^2(\nabla \psi_k).
\end{equation}
Moreover, by (\ref{eq18})-(\ref{eq20}), (\ref{eq25}), (\ref{eq27}), (\ref{eq29}) we get, for $ z \in W_R, $

\begin{equation}\label{eq71}
\Phi(\nabla\psi_k) \, \bigl\langle \nabla u_\varepsilon , \nabla_{\scriptscriptstyle\!\zeta} \Phi(\nabla\psi_k)\bigr\rangle  
\;=  \frac{u_\varepsilon  \psi^2 J_\varepsilon }{2 \psi_k P_k \rho^2}
\;  \big(   1 - \alpha - n  -  S_k
 \big)  + \frac{u_\varepsilon}{\psi_k}\Phi^2(\nabla\psi_k).
\end{equation}
Substituting (\ref{eq70})-(\ref{eq71}) into (\ref{eq67}), we obtain that

$$ f(u_\varepsilon, \psi_k) \,=\, \rho^{2\varepsilon_0-2} \, \psi^2 \, 
\big( X_1^{2\varepsilon_1-1} X_2^{2\varepsilon_2-1} \cdots X_k^{2\varepsilon_k-1}(\rho)
\big) \, J_\varepsilon^2,\quad \mbox{in}\;\; W_R.  $$
As above, we split the integration ``near and far'' from the singularity at the origin, and taking into account \eqref{rel46}, we get, for some positive constant $ C_{{\scriptscriptstyle H},n,\alpha,b} $ independent of $ \varepsilon, $ 

$$
\int_{U^+} y^\alpha  f(u_\varepsilon, \psi_k) \,dz
\leq
\int_{W_R^+} y^\alpha \rho^{2\varepsilon_0-2} \, \psi^2 \, 
\big( 
X_1^{2\varepsilon_1-1} X_2^{2\varepsilon_2-1} \cdots X_k^{2\varepsilon_k-1}(\rho)
\big)
\, J_\varepsilon^2 \,dz 
+ O(1),\qquad\mbox{as}\;\;\varepsilon\to0.
$$
By monotone convergence, the integral in the right hand side vanishes as $\varepsilon\to0, $ therefore

\begin{equation}\label{72}
\int_{U^+} y^\alpha  f(u_\varepsilon, \psi_k) \,dz = O(1),\qquad\mbox{as}\;\;\varepsilon\to0.
\end{equation}
Then \eqref{68} follows from 
 \eqref{eq69} together with \eqref{72},  and the proof of the optimality of the constants $ \frac{1}{4} $ is now complete.

To complete the proof of Theorem \ref{Theorem2}, it remains to verify that 
the logarithmic weight of the remainder terms cannot be replaced by smaller powers of $ X_i. $ More precisely, for any $ k = 1, 2,\ldots, $ we consider the $(k-1)$-improved Finsler Hardy-trace Hardy functional

$$ 
I_k[u]:=
\int_{U^+} y^\alpha \, \Phi^2(\nabla u)  \, dz
-K(n,\alpha, b )
\!\!
\int_{U_0} \frac{u^2(x,0)\, dx }{ H_0^{1-\alpha}(x)} 
-\frac{ ( \alpha + b - 2 )^2 }{4}
\int_{U^+} \frac{  y^\alpha u^2}{ \Phi_0^2} \,dz
-\frac{1}{4} \sum\limits_{i=1}^{k-1}
\int_{U^+}
\frac{  y^\alpha P_i^2  u^2}{\Phi_0^2}\, dz
$$  where the summation $ \sum_{i=1}^{k-1} $ denotes zero if $ k=1, $ and we have to show that the following inequality fails for any $ 0 < \epsilon < 2,\; C>0, \, $

$$  C \int_{U^+} \frac{ y^\alpha\, P^2_{k-1}  X_{k}^{2-\epsilon}}{ \Phi_0^2(z)  }\, u^2(z)\,dz \leq I_k[u], \;\;\; \forall u \in C_0^\infty(U).
$$ We will confirm the claim by taking a sequence $ \{u_m\} \subset  D^{1,2}(U^+,  y^\alpha dz), $ such that 

$$
\frac
{I_k[u_m]
}
{
\int_{U^+} 
\frac{ y^\alpha \, P^2_{k-1}  X_{k}^{2-\epsilon} \,u_m^2(z)}{ \Phi_0^2(z) }\, dz }
\xrightarrow{m \to \infty} 0,
$$ which, in view of \eqref{eq66}, is equivalent to  showing that 
\begin{equation}\label{73}
\frac{N[u_m]}{D[u_m]}
:=
\frac{
\int_{U^+} y^\alpha  f(u_m, \psi_{k-1})  \, dz
}
{
\int_{U^+} 
\frac{ y^\alpha \,P_{k-1}^2\, X_k^{2-\epsilon} \,  u_m^2(z)  } {\Phi_0^2(z)} \, dz
}
\xrightarrow{m \to \infty} 0.
\end{equation}
Notice also that it suffices to prove  the claim, only for the case $ 0 < \epsilon < 1, $ since $ X_k^{2-\epsilon_0} > X_k^{2-\epsilon}, $ $ \forall \epsilon_0 > \epsilon. $ To determine such a sequence satisfying \eqref{73}, we take $ \epsilon < \epsilon_m < 1, $ with
\begin{eqnarray}\label{74}
 \epsilon_m \xrightarrow{m\to\infty}\epsilon, \quad\mbox{ so that }  \quad m^{\epsilon-\epsilon_m}<1/2, 
\end{eqnarray} and  define recursively the radii $ R_1(m) = e^{1-m},\; R_{j + 1}(m)=R_j(e^{m-1}),\, j=1, 2 , \ldots, k,  $ so that $$ X_{k}(\rho) = \frac{1}{m}  \Longleftrightarrow \rho = R_{k}(m). $$ We then consider the functions $ u_m=\eta \psi_{k-1} \upsilon_m, $ where $ \eta \in C_0^\infty(U)$ with $ \eta\equiv 1 $ in $  W_\delta,  $ and  \begin{eqnarray*}
 \upsilon_m(z)=
\begin{cases} 
X_{k}^\frac{\epsilon_m-1}{2}(\rho)
, \; &  R_{k}(m) \leq \rho, \\
  m^\frac{3-\epsilon_m}{2}X_{k}(\rho)
  , \; & \rho\leq R_{k}(m), 
        \end{cases} 
\qquad \rho=\Phi_0(z).
\end{eqnarray*}
After some straightforward manipulations, using (\ref{eq18}), (\ref{eq19}), (\ref{eq25}),  (\ref{eq27}), (\ref{eq29}),  jointly with (\ref{eq43}), (\ref{eq45}),  and utilizing  (\ref{eq63}),  (\ref{eq64}), we deduce that,  in $ W_\delta, $
$$
f(u_m, \psi_{k-1})  = 
\frac{1}{\rho^2} \, P_{k-1}^{-1} \, \psi^2 \,\phi_m^2,
\qquad 
\mbox{where}  \qquad 
\phi_m(\rho):=
\begin{cases}
\frac{\epsilon_m-1}{2} 
P_{k}X_{k}^\frac{\epsilon_m - 1}{2}(\rho), &   R_k(m) <  \rho < \delta, 
\\
m^\frac{3 - \epsilon_m}{2}\, P_{k}X_{k}(\rho),  & \rho <  R_k(m),
\end{cases}
$$ which together with the fact that  
 $ \psi  \sim \Phi_0^{-\frac{n+\alpha-1}{2}} $ (by \eqref{rel46}),  imply that, for some positive constant $ C_1 $ independent of $ m, $ 
\begin{equation}
N[u_m] \leq C_1 \int_{W_\delta^+ } 
\frac{  y^\alpha P_{k-1}^{-1} {\phi_m^2} }{ \Phi_0^{n+\alpha+1} }\,dz + O(1),
\qquad\mbox{as}\;\; m \to\infty.
 \end{equation}
By \eqref{rel46} as well we have that
\begin{equation}
D[u_m] \geq C_2 
\int_{W_\delta^+} \frac{ y^\alpha\, P_{k-1} X_{k}^{2-\epsilon}\,
{ \upsilon_m^2}}{ \Phi_0^{n+\alpha+1} }\,dz  + O(1),
\qquad\mbox{as}\;\; m \to\infty,
\label{76}
\end{equation}
for some positive constants $  C_2 $ independent of $ m. $ 
Then, we employ $\Phi_0$-polar coordinates as above, and next, on account of (\ref{eq63}), make a change of the $\rho$-variable,  to obtain
\begin{equation}\label{77}
\int_{W_\delta^+ } 
\frac{  y^\alpha P_{k-1}^{-1} \phi_m^2 }{\Phi_0^{n+\alpha+1} }\,d z 
=  C_{\!{\scriptscriptstyle H}\!,\delta,n,\alpha}
\left(
 \frac{(\epsilon_m - 1)^2}{4\epsilon_m }(X_k^{\epsilon_m}(\delta) - m^{-\epsilon_m})
+
\frac{m^{-\epsilon_m }}{3}
\right)
\end{equation}
and
\begin{equation}\label{78}
\int_{W_\delta^+} \frac{ y^\alpha\, P_{k-1} X_{k}^{2-\epsilon}\,
{\upsilon_m^2}}{\Phi_0^{n+\alpha+1}}\,dz =
C_{\!{\scriptscriptstyle H}\!,\delta,n,\alpha}
\left(
\frac{X_k^{\epsilon_m - \epsilon}(\delta) - m^{\epsilon-\epsilon_m}}{\epsilon_m - \epsilon}
 +  \frac{m^{\epsilon-\epsilon_m}}{3-\epsilon}
 \right),
\end{equation} where $ C_{\!{\scriptscriptstyle H}\!,\delta,n,\alpha} :=\int_{(\partial W_\delta)^+} y^\alpha\, \Phi(\frac{\nabla\Phi_0}{|\nabla\Phi_0|}) \, d\sigma(z). $ 
Finally, (\ref{73}) follows by taking the limit as $ m \to \infty $ in (\ref{77})-(\ref{78}), combined with (\ref{74})-(\ref{76}). This completes the proof of Theorem \ref{Theorem2}. \end{proof}
\quad\\
Let us next 
discuss the validity of Hardy and trace Hardy inequalities on a cone $ {\mathcal C}_\vartheta, $ with respect to the $\Phi$-metric, which is defined as

$$
{\mathcal C}_\vartheta = \big\{(x, y):\in\mathbb R^{n+1}:\;\; y > (\tan\vartheta) H_0(x)\big\},
$$
for some angle $ \vartheta\in\big(0, \frac{\pi}{2}\big). $ 

The extension of the Finsler-Kato-Hardy inequality \eqref{ineq4} to the case of the cone $ {\mathcal C}_\vartheta, $ with $\vartheta\neq0, $ states 
that for $ \,2 \leq b < n+1,  $ 

\begin{equation}\label{ineq79}
K_\vartheta(n,b)
\int_{\mathbb R^n}
\frac{u^2(x, (\tan\vartheta) H_0(x))}{ H_0(x)} \, dx
 + 
 \frac{(b -2)^2}{4} \int_{{\mathcal C}_\vartheta} \frac{  u^2(z)}{ \Phi_0^2(z)}\,dz
\leq
\int_{{\mathcal C}_\vartheta}  \Phi^2(\nabla u)\,dz,
\qquad
\forall u \in C_0^\infty(\mathbb R^{n+1}),
\end{equation}
with the best possible constant
\begin{multline*}
K_\vartheta(n, b) 
= 
\beta \,  \tan\vartheta
\,+\,
\Big[
 2\, a_1\, b_1\, (\tan\vartheta)
\,F(a_1+1,\, b_1+1;
\, \frac{3}{2};\, -\tan^2\vartheta)
+ \frac{C(n,b)}{2}
\,F(a_1 + \frac{1}{2},\,b_1 + \frac{1}{2};\, \frac{3}{2};\, -\tan^2\vartheta)
\\
-
\frac{2\,C(n,b)}{3} \, (a_1 + \frac{1}{2})\, (b_1 + \frac{1}{2}) \, (\tan^2\vartheta)
\,F(a_1+\frac{3}{2},\,b_1+\frac{3}{2};\, \frac{5}{2};\, -\tan^2\vartheta)
\Big]\;\widetilde{C}
\end{multline*}
where $ C(n,b) $ is given in \eqref{eq5},  
$ a_1:=\frac{5-n-b}{4},\, b_1=\frac{n+1-b}{4}, $
$ F$ stands for the hypergeometric function (see Appendix for the precise definition), and
\begin{equation*}
 \widetilde{C}
:=
2\,(\cos\vartheta)^{-2} \,
\Big[
 F(a_1, \,b_1;\, \frac{1}{2};\,-\tan^2\vartheta) 
\,-\, 
 C(n,b) \,(\tan\vartheta)\, F(a_1 + \frac{1}{2},\,b_1 + \frac{1}{2};\,\frac{3}{2};\,-\tan^2\vartheta)
\Big]^{-1}.
\end{equation*} 
%
%
%
It is clear that \eqref{ineq79} reduces to \eqref{ineq4}, for $\vartheta \to 0, $ as it is easily seen that 
$$ 
\lim\limits_{\vartheta\to0} K_\vartheta(n, b) = C(n,b).
$$
Inequality \eqref{ineq79} have been recently established by Alvino et al.  \cite{AlvinoTakahashi}, by adjusting their strategy of the half-space case to the cone's context; it is also easily seen to be proved by adapting the proof of Theorem \ref{Theorem1}. We may as well follow the argumentation of the proof of Theorem \ref{Theorem2} above, to get a refined version of \eqref{ineq79} with sharp correction terms, when $ {\mathcal C}_\vartheta $ is replaced by bounded conic domains. More precisely, we have the following extension of Theorem \ref{Theorem2} to the case of the domain $ \,U_\vartheta:=U\cap{\mathcal C}_\vartheta,\, $  the intersection of the infinite cone $ {\mathcal C}_\vartheta $  with  a bounded domain $ U $ containing the origin:

\begin{theorem}[Series-type improvement of interpolated Finsler-Kato-Hardy inequalities on cones]\label{Theorem3} Let $ \,2 \leq b < n+1. $  
Then the following inequality is valid for all $ u\in  C_0^\infty(U), $ 

\begin{equation} \label{80}
K_\vartheta(n,b) \int_{\mathbb R^n} \frac{u^2(x, (\tan\vartheta) H_0(x))}{ H_0(x)} \, dx +  \frac{(b-2)^2}{4} 
\int_{ U_\vartheta } \frac{  u^2(z)}{\Phi_0^2(z)}\; dz + 
\frac{1}{4}\sum_{i=1}^\infty \int_{ U_\vartheta } \frac{ P^2_i}{\Phi_0^2(z)} \, u^2(z) \;dz \leq \int_{ U_\vartheta } \Phi^2(\nabla u)\;dz,\;\,
\end{equation} where 
$ P_i=P_i(  \Phi_0(z)/D), $ with 
 $ D:=\sup\limits_{z\in  U_\vartheta}  \Phi_0(z). $ For any $ k = 1, 2, \cdots, $ and $ b \in [2, n+1), $ the constant $ \frac{1}{4} $ of the $ k$-th remainder term is the best possible,   i.e.
$$ 
\frac{1}{4}
=
\inf\limits_{u\in  C_0^\infty(U)}
\frac{
\int_{U_\vartheta} \Phi^2(\nabla u) \, dz
-
K_\vartheta(n, b)
\int_{\mathbb R^n}
\frac{u^2(x, (\tan\vartheta) H_0(x))}{H_0(x)}\,dx
- \frac{(b-2)^2}{4} \int_{U_\vartheta} \frac{ u^2(z)}{ \Phi_0^2(z)} dz
- \frac{1}{4}
\sum\limits_{i=1}^{k-1} \int_{U_\vartheta} \frac{  P^2_i}{\Phi_0^2(z)}\, u^2(z)\,dz
}
{
\int_{U_\vartheta} \frac{ P^2_k}{\Phi^2(z)}\, u^2\,dz
}.
$$
Moreover, for each $ i=1, 2, \ldots, $ the logarithmic weight $X_i^2 $ in \eqref{80} cannot be replaced by a smaller power of $X_i. $
\end{theorem}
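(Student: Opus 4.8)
\textbf{Proof plan for Theorem \ref{Theorem3}.}
The plan is to mirror, almost verbatim, the three-part argument used for Theorem \ref{Theorem2}, replacing the half-space $\mathbb R^{n+1}_+$ by the conic domain $U_\vartheta = U\cap\mathcal{C}_\vartheta$ and the ground state $\psi$ of \eqref{eq33} by the corresponding formal optimizer $\psi^\vartheta$ of the Euler--Lagrange problem attached to \eqref{ineq79}. Concretely, one first constructs $\psi^\vartheta$ in the separated form $\psi^\vartheta(z)=\Phi_0^\gamma(z)\,H_0^\beta(x)\,B(y/H_0(x))$ exactly as in \S\ref{subsection.2.2}, the only difference being that the trace condition is now prescribed on the slanted boundary $\{y=(\tan\vartheta)H_0(x)\}$, which in the $t=y/H_0(x)$ variable becomes a boundary condition at $t=\tan\vartheta$ rather than at $t=0$; this is what produces the modified constant $K_\vartheta(n,b)$ through the hypergeometric representation already recorded in the excerpt. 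One then sets $\psi_k^\vartheta(z)=\psi^\vartheta(z)P_k^{-1/2}(\Phi_0(z))$ and checks, using the differentiation formulas \eqref{eq63}--\eqref{eq64}, that $\psi_k^\vartheta$ solves the analogue of \eqref{eq65} on $U_\vartheta^+$ with the slanted trace condition.

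Next, I would establish \eqref{80} itself: integrate the Picone identity \eqref{eq47}--\eqref{eq48} (which is purely algebraic and holds verbatim here) against $\psi_k^\vartheta$ over $U_\vartheta$, apply the divergence theorem, and use the Euler--Lagrange equations to obtain the ground-state representation, i.e. the cone analogue of \eqref{eq66}; since $f(u,\psi_k^\vartheta)\ge0$ by \eqref{eq49}, dropping this term and letting $k\to\infty$ gives \eqref{80} for $u\in C_0^\infty(U\setminus\{0\})$, and then for all $u\in C_0^\infty(U)$ by the usual capacity-type approximation removing the origin. For the optimality of the constants $\tfrac14$, I would fix $k$ and test with the truncated perturbations $u_\varepsilon=\eta\,\psi_{k-1}^\vartheta\,\upsilon_\varepsilon(\Phi_0)$, $\upsilon_\varepsilon=\rho^{\varepsilon_0}X_1^{\varepsilon_1}\cdots X_k^{\varepsilon_k}$, repeating the computation that led to \eqref{eq69}--\eqref{72}: the denominator $\int_{U_\vartheta^+}y^\alpha P_k^2 u_\varepsilon^2/\Phi_0^2\,dz$ blows up after the iterated limits $\varepsilon_0\to0,\dots,\varepsilon_k\to0$ by the $X_k^{2\varepsilon_k}(R)/(2\varepsilon_k)$ mechanism, while $\int_{U_\vartheta^+}y^\alpha f(u_\varepsilon,\psi_{k-1}^\vartheta)\,dz=O(1)$ by monotone convergence, so $Q_k[u_\varepsilon]\to\tfrac14$. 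Finally, the non-improvability of the power $X_i^2$ follows by the last construction in the proof of Theorem \ref{Theorem2}: with $\epsilon<\epsilon_m<1$ as in \eqref{74}, the nested radii $R_1(m)=e^{1-m}$, $R_{j+1}=R_j(e^{m-1})$, and the two-branch test functions $\upsilon_m$, one computes $N[u_m]/D[u_m]\to0$ via the elementary integrals \eqref{77}--\eqref{78}, whose form is unchanged because all the geometry enters only through the single constant $C_{\!{\scriptscriptstyle H}\!,\delta,n,\alpha}$ (now a boundary integral over $(\partial W_\delta)^+\cap\mathcal{C}_\vartheta$).

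The two places requiring genuine verification rather than transcription are: (i) the asymptotics of the new $B$ near $t=\tan\vartheta$ and as $t\to\infty$, needed to guarantee that $\psi^\vartheta\sim\Phi_0^{-(n+\alpha-1)/2}$ (the cone analogue of Lemma \ref{lemma1}) and that the boundary integrals arising in the divergence theorem on $\partial U_\vartheta$ behave as claimed --- but here the statement is restricted to $\alpha=0$, so the relevant estimates are exactly those already contained in \cite{AlvinoTakahashi} for the proof of \eqref{ineq79}; and (ii) checking that the slanted trace term $\int_{\mathbb R^n}u^2(x,(\tan\vartheta)H_0(x))/H_0(x)\,dx$ appears with the right coefficient after integration by parts, which amounts to identifying $\lim_{t\to(\tan\vartheta)^+}$ of a multiple of $B'(t)$ with $-K_\vartheta(n,b)$, again a computation already done in deriving $K_\vartheta(n,b)$. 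I expect the main obstacle to be purely bookkeeping: carrying the slanted boundary through every integration by parts and confirming that no extra boundary contributions on the lateral face $\partial\mathcal{C}_\vartheta\cap U$ survive --- they do not, because $u_\varepsilon$ and $u_m$ are built from $\psi_k^\vartheta$, which satisfies precisely the natural (Neumann-type) condition there, so the flux term vanishes exactly as in the flat case.
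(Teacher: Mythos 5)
Your proposal follows essentially the same route as the paper: the paper likewise reuses the ground state $\psi_k=\psi\,P_k^{-1/2}(\Phi_0)$ on the cone (your $\psi^\vartheta$ is just $\psi$ renormalized, since the decay condition at infinity determines $B$ up to a multiplicative constant), verifies the Robin-type condition on the slanted boundary with $K_\vartheta(n,b)=\beta\tan\vartheta-\frac{B'(\tan\vartheta)}{B(\tan\vartheta)\cos^2\vartheta}$, and then repeats the Picone/ground-state-representation argument and the same two families of perturbations from Theorem \ref{Theorem2} for the optimality of $\tfrac14$ and of the logarithmic powers. So the proposal is correct and matches the paper's (outlined) proof, with the boundary flux producing the trace term rather than vanishing, exactly as you identify earlier in your plan.
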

As already mentioned, we may prove Theorem \ref{Theorem3}, by adopting the argumentation for the proof of Theorem \ref{Theorem2}. 
For the sake of completeness, let us outline the proof, yet omitting the detailed calculations, as they are straightforward generalizations of the previous ones.

To prove \eqref{80}, we consider again the function $ \psi_k $ defined in \eqref{eq62}, and notice that it satisfies the Euler-Lagrange 
equations associated to  \eqref{80},
\begin{numcases}{\label{81}} 
\operatorname{div}\big(\Phi(\nabla\psi_k) \,\nabla_{\!\zeta}\Phi(\nabla\psi_k)\bigr)
 + \frac{(b-2)^2}{4}\frac{\psi_k(z)}{\Phi^2_0(z)} + \frac{\psi_k(z)}{4  \Phi_0^2(z)}\sum_{i=1}^k P^2_i(\rho) = 0, 
\quad z\in {\mathcal C}_\vartheta, 
\label{81a}
\\
\big\langle
\hat{\eta}_{\vartheta}(x)
,\;
\Phi(\nabla\psi_k)\nabla_{\!\zeta}\Phi(\nabla\psi_k)
\big\rangle
=  
K_\vartheta(n,\beta)
\; 
m_{\vartheta}(x) 
\;
\frac{\psi_k(z) }{H_0(x)}
,\quad z=(x,(\tan\vartheta) H_0(x)),
\;\; x \in  \mathbb R^{n}\setminus\{0\},
\label{81b}
\end{numcases} where
\begin{equation*}
K_\vartheta(n,b)
=
\beta \tan\vartheta
-
\frac{  B'(\tan\vartheta)}{B(\tan\vartheta) \cos^2\vartheta}
\end{equation*}
and the vector $$\hat{\eta}_\vartheta(x):=m_{\vartheta}(x)\, \big( (\tan\vartheta) \nabla H_0(x), \; - 1 \big),
\qquad\mbox{with}\qquad\quad
m_{\vartheta}(x):=\frac{1}{\sqrt{1+\tan^2\!\vartheta |\nabla H_0(x)|^2}},$$
is  the unit outward normal vector on the boundary 
$$
\partial {\mathcal C}_\vartheta = \{z=(x,(\tan\vartheta) H_0(x)),\;\forall x\in\mathbb R^n\}.
$$
Indeed, the PDE \eqref{81a} is nothing but the PDE (\ref{65a}), with $\alpha=0 $ there,
which we have already verified, while 
(\ref{81b}) can be straightforwardly checked by exploiting the Finsler-norms' properties: (\ref{eq18}), (\ref{eq19}),  (\ref{eq25}), (\ref{eq27}), (\ref{eq29}).

Again, by standard approximation, it suffices to prove \eqref{80} for  $ u \in C^\infty_0(U\setminus\{0\}). $
Then, for any $ k, $ we apply \eqref{eq47} for such $ u $ with $ \psi_k, $ integrate and
 apply the divergence theorem to the last term,  and finally use equations \eqref{81}, to obtain $$
0\leq  \int_{U_\vartheta} f(u, \psi_k) \,dz  =    
\int_{U_\vartheta}   \Phi^2(\nabla u)  \,dz
- K_\vartheta(n, b) \int_{\partial {\mathcal C}_\vartheta}   \frac{u^2}{H_0} \, m_\vartheta\, d\sigma  - 
\frac{( b - 2 )^2 }{4} \int_{U_\vartheta} \frac{u^2}{\Phi_0^2} \,dz  \, - \frac{1}{4} \sum_{i=1}^{k} \int_{U_\vartheta}  \frac{P_i^2\,u^2}{\Phi_0^2} \,dz,   $$ which proves \eqref{80}. Here $ d\sigma $ denotes the $n$-dimensional integration over the boundary $ \partial {\mathcal C}_\vartheta, $ i.e. $ d\sigma= dx/m_\vartheta. $  Furthermore, for each $ k, $ the optimality of the constant $ \frac{1}{4} $ of the remainder term, as well as the optimality of logarithmic weight's power, are again demonstrated, as in the proof of Theorem \ref{Theorem2},  by taking suitable energetic approximations of the ground state $ \psi_k. $ 
\section*{Appendix} As mentioned in Section  \ref{subsection.2.2}, this appendix provides a detailed proof of the properties of $ B $ stated therein. For completeness' sake, and so that the context becomes more self-contained, we begin with a brief summary of those properties of hypergeometric equations and their solutions, that we use thereafter in surveying $ B. $ 
\paragraph{Hypergeometric equation}
In what follows, we refer to  \cite[\S 15]{Abramowitz.handbook}, \cite[Chap. II]{Erdelyietal} and \cite[\S\S 2.1.2-2.1.5]{Polyanin.Zaitsev.Handbook.Exact.Solutions.for.O.D.E}, among many other texts regarding hypergeometric equations.

For a complex function $\omega$ of the complex variable $\mathrm{z},$ let us consider the hypergeometric differential equation
\begin{equation}\label{82}
\mathrm{z} \,(1-\mathrm{z})\,  \frac{d^2\omega}{d\mathrm{z}^2} \,+\, [ \,\crc \,- \,( \ara\, +\, \brb\, + \,1) \mathrm{z}\, ] \, \frac{d\omega}{d\mathrm{z}}\, 
- \, \ara \,\brb \, \omega = 0
\end{equation} where $\ara, \brb, \crc \in \mathbb R $ such that
\begin{equation}\label{83}
\crc - \ara - \brb \geq 0, \qquad \brb>0, \qquad \crc > 0.
\end{equation}

The general solution of (\ref{82}), defined in the complex domain cut along the interval $[1,\infty ) $ of the real axis, is given by (see \cite[15.5.3, 15.5.4]{Abramowitz.handbook}) \begin{equation}\label{84}
\omega(\mathrm{z}) = \mathcal{C}_1 \, F(\ara, \,\brb;\, \crc;\,\mathrm{z}) + 
 \mathcal{C}_2 \,\mathrm{z}^{1-\crc}\, F(\ara-\crc+1,\, \brb-\crc+1;\, 2-\crc;\, \mathrm{z}), 
\end{equation}for arbitrary complex constants $\mathcal{C}_1, \mathcal{C}_2.$
Here the hypergeometric function $ F(\ara,\, \brb;\, \crc;\, \mathrm{z}) $
 is defined by the Gauss' series \cite[15.1.1]{Abramowitz.handbook}
\begin{equation}\label{85}
F(\ara,\, \brb;\, \crc;\, \mathrm{z})
=\sum\limits_{k=0}^\infty \frac{(\ara)_k (\brb)_k}{(\crc)_k} \frac{\mathrm{z}^k}{k!},
\end{equation} in the disk $|\mathrm{z}|<1, $ and by analytic continuation in the whole of complex plain cut along the interval $[1,\infty ) $ of the real axis. In (\ref{85}) we use the usual abbreviation $(\ara)_k = \ara (\ara+1) \cdots (\ara+k-1)$ and $(\ara)_0=1. $ Obviously, there holds 
$$ F(\ara,\, \brb;\, \crc;\, \mathrm{z}) = F(\brb,\, \ara;\, \crc;\, \mathrm{z}).$$

For later use, let us give an explicit expression of 
the analytic continuation of the series (\ref{85}) into the domain $\{\mathrm{z}\in\mathbb C: |\mathrm{z}|>1,\, \mathrm{z}\not\in (1,\infty) \}. $ To this aim, we assume $|\mathrm{z}|>1,\; \mathrm{z}\not\in (1,\infty) $ and distinguish the following 
 cases.
\\
\\
\noindent
\textit{Case I: } If none of the numbers $\ara, \, \crc-\brb,\, \ara-\brb $ equals a non positive integer $m=0, -1, -2,\dots,$ then we have (see \cite[15.3.7]{Abramowitz.handbook})

\begin{eqnarray}\label{86}
F(\ara,\, \brb;\, \crc;\, \mathrm{z})&=&\frac{\Gamma(\crc) \Gamma(\brb-\ara) }{\Gamma(\brb) \Gamma(\crc-\ara)} 
(-\mathrm{z})^{-\ara}
F(\ara,\, \ara-\crc+1;\, \ara-\brb+1;\, \frac{1}{\mathrm{z}})
 \nonumber
\\
& & 
+\,
\frac{\Gamma(\crc) \Gamma(\ara-\brb) }{\Gamma(\ara) \Gamma(\crc-\brb)} 
(-\mathrm{z})^{-\brb}
F(\brb,\, \brb-\crc+1;\, \brb-\ara+1;\, \frac{1}{\mathrm{z}}).
\end{eqnarray}
\noindent
\textit{Case II: } If $\ara=\brb \neq -m, \; \forall m=0, -1, -2,\dots, \,$ and 
$\crc-\ara \neq l,\; \forall l = 1,  2,\ldots, $ then we have (see \cite[15.3.13]{Abramowitz.handbook}) 

\begin{equation}\label{87}
F(\ara,\, \ara;\, \crc;\, \mathrm{z})=\frac{\Gamma(\crc)   (-\mathrm{z})^{-\ara}}{\Gamma(\ara) \Gamma(\crc-\ara)}
\sum\limits_{k=0}^\infty \frac{(\ara)_k (1-\crc+\ara)_k }{(k!)^2} \,\mathrm{z}^{-k}
\big[
\ln(-\mathrm{z})+ 2 \Psi(k+1) -\Psi(\ara+k) - \Psi(\crc-\ara-k)
\big],
\end{equation}
where $ \Psi $ stands for the logarithmic derivative of the Gamma function, that is  $ \Psi(\mathrm{z})= -  \upgamma - \sum\limits_{k=0}^\infty\left(\frac{1}{\mathrm{z}+k}-\frac{1}{k+1}\right) $ with the Euler's constant $ \upgamma\; \approx 0.5772156649. $
\\
\\
\noindent
\textit{Case III: } Let us now consider the case where $\brb-\ara=m,\, $  $ m=1, 2,\dots,$ and $\ara \neq -k, \, \forall k=0,1,2,\dots $ 

If  $\crc-\ara \neq l,\,\forall l = 1, 2, \ldots,\, $ then 
we have (see \cite[15.3.14]{Abramowitz.handbook})
\begin{eqnarray}\label{88}
F(\ara,\, \ara+m;\, \crc;\, \mathrm{z})&=&\frac{\Gamma(\crc)   (-\mathrm{z})^{-\ara-m}}{\Gamma(\ara+m) \Gamma(\crc-\ara)}
\sum\limits_{k=0}^\infty \frac{(\ara)_{k+m} (1-\crc+\ara)_{k+m} }{(k+m)! \, k!} \mathrm{z}^{-k}
\big[
\ln(-\mathrm{z})
+ \Psi(1+m+k) + \Psi(1+k) 
\nonumber\\
& & -\Psi(\ara+m+k) - \Psi(\crc-\ara-m-k)
\big] + (-\mathrm{z})^{-\ara} \frac{\Gamma(\crc)}{\Gamma(\ara+m)} \sum\limits_{k=0}^{m-1} \frac{\Gamma(m-k)(\ara)_k}{k! \Gamma(\crc-\ara-k)} \mathrm{z}^{-k}.\end{eqnarray}

On the other hand, if $ \crc-\ara = l, $ where $ l=1,2,\ldots, $ such that $ l>m, $ then we have by \cite[(19) in \S2.1.4]{Erdelyietal},

\begin{flalign}\label{89}
 F(\ara,\, \ara+m;\, \ara+l;\, \mathrm{z}) & =
\frac{ \Gamma(\ara+l) }{ \Gamma(\ara+m) } (-\mathrm{z})^{-\ara}
\Bigg[(-1)^l (-\mathrm{z})^{-m}
\sum\limits_{k=l-m}^\infty \frac{(\ara)_{k+m} (k+m-l)! }{(k+m)!\,  k!} \,  \mathrm{z}^{-k}
\nonumber \\
&\quad 
+ \sum\limits_{k=0}^{m-1} 
\frac{(m-k-1)!\, (\ara)_k}{(l-k-1)!\, k! }\, \mathrm{z}^{-k} +
\frac{(-\mathrm{z})^{-m}}{(l-1)!}
\sum\limits_{k=0}^{l-m-1} \frac{(\ara)_{k+m} (1-l)_{k+m} }{(k+m)! \, k!} \, \mathrm{z}^{-k}\,\times
\nonumber
\\
&\qquad
\times\Big[\ln(-\mathrm{z})
 + \Psi(1+m+k) + \Psi(1+k) 
 -\Psi(\ara+m+k) - \Psi(l-m-k)
\Big]\Bigg].
\end{flalign}
\noindent
\textit{Case IV: } If at least one of the numbers $\ara, \crc-\brb $ equals a non-positive integer,  then $ F(\ara, \brb; \crc; \mathrm{z}) $ becomes an elementary function of $\mathrm{z}. $ 
More precisely, if $\ara=-m  $ with $m=0,1,2,\dots $ then the hypergeometric series in (\ref{85}) reduces to the polynomial (see \cite[15.4.1 ]{Abramowitz.handbook})
\begin{equation}\label{90}
F(-m,\, \brb;\, \crc;\, \mathrm{z})=\sum\limits_{k=0}^m \frac{(-m)_k (\brb)_k}{(\crc)_k} \frac{\mathrm{z}^k}{k!}.
\end{equation}On the other hand, if $\crc -  \brb =  -l,\; $
with $ l = 0, 1,  2,\ldots,\, $ then $ F(\ara,\, \brb;\, \crc;\, \mathrm{z}) $ is written in the form (see \cite[15.3.3]{Abramowitz.handbook}) \begin{eqnarray}\label{91}
F(\ara, \brb; \crc;\, \mathrm{z}) =  (1-\mathrm{z})^{-\ara-l} F(\crc-\ara,\, -l;\, \crc ;\, \mathrm{z}),
\end{eqnarray} where the hypergeometric function in the right hand side is a polynomial of degree $l, $ according to (\ref{90}).

We conclude this summary with the following useful formula, when differentiating $ F(\ara,\, \brb;\, \crc;\, \cdot) $ (see \cite[15.2.1]{Abramowitz.handbook})
\begin{equation}\label{92}
\frac{d}{d\mathrm{z}}F(\ara,\, \brb;\, \crc;\, \mathrm{z}) = \frac{\ara \, \brb}{\crc}\; 
F(\ara+1,\, \brb+1;\, \crc+1;\, \mathrm{z}).
\end{equation}
\paragraph{Asymptotics of $ \bm{B} $}  In this paragraph, we prove the positivity and monotonicity of $ B,  $ as well as the asymptotics \eqref{eq40}-\eqref{eq42}.
For notational convenience, hereafter we abbreviate 
\begin{flalign*}
& & & a_1  =  \frac{5-n-b}{4}, &  
& & & a_2 = a_1-c_1+1 = \frac{7 - n - 2 \alpha - b}{4}, & 
& & & c_1=\frac{\alpha+1}{2}, & 
\\
& & & b_1 = -\frac{\beta}{2} = \frac{n+1-b}{4}, &
& & & b_2 = b_1-c_1+1 = \frac{3+n-2\alpha -b}{4}, & 
& & & c_2 = 2 - c_1 = \frac{3-\alpha}{2}.&
\end{flalign*}
%
%
It is straightforward to verify that both sets of parameters 
$\; \{a_1, \,b_1, \,c_1\},\, $ $\{a_2, \,b_2, \,c_2\},\; $ satisfy the conditions  (\ref{83}). Therefore, we can apply  the theory, that is presented in the previous paragraph, wherever is needed. 

Firstly, we will derive an explicit expression of $ B(t)=\omega(\mathrm{z}), $ from which will result all its properties. 
Equation (\ref{eq39a}) belongs to the class of hypergeometric equations and according to (\ref{84}), the general solution is given by

\begin{equation}\label{93}
 \omega(\mathrm{z}) \,=\, C_1 \, F(a_1, \,b_1;\, c_1;\,\mathrm{z}) \,+\, 
 C_2 \,(-\mathrm{z})^{1-c_1}\, F(a_2,\,b_2;\,c_2;\,\mathrm{z}),\qquad \mathrm{z}\leq 0,
\end{equation} for some constants $ C_1,\, C_2, $ which will be specified by the boundary conditions. Applying condition (\ref{eq39b}) to (\ref{93}), and noting that $ F(a_1,\, b_1;\, c_1;\; 0) = F(a_2,\, b_2;\, c_2;\; 0) = 1, $ we obtain $ C_1=1. $ 

The constant $ C_2 $ will be evaluated by the condition at $\infty, $ so we need an expression for $\omega(\mathrm{z}),\, $ when $ \mathrm{z}<-1. $ To this end, we will distinguish the cases for $n, \alpha, b, $ corresponding to the formulas (\ref{86}) - (\ref{91}), which give the explicit expression for the hypergeometric functions in (\ref{93}). In all cases, we will show that 

\begin{equation}\label{94}
C_2 = - \frac{\Gamma(c_1)\Gamma(b_2)\Gamma(c_2-a_2)}{\Gamma(c_2) \Gamma(b_1) \Gamma(c_1-a_1)},
\end{equation} as well as the following asymptotics, \begin{eqnarray}\label{95}
\omega(\mathrm{z})=O\left((-\mathrm{z})^{-b_1}\right), \;\;\mbox{as}\;\; \mathrm{z}\rightarrow -\infty.
\end{eqnarray} In order to prove the claims (\ref{94})-(\ref{95}), we assume that $ \mathrm{z}<-1, $ and distinguish the following cases.
\\
\\
\noindent
\textit{Case I:} Assume that none of the numbers $a_1,\, $ $a_2,\, $ $c_1-b_1,\, $ $c_2-b_2,\, $ $a_1-b_1=$ $a_2-b_2=$ $\frac{2-n}{2}, \,$
is equal to a non positive integer. 
 Then, we substitute the expressions of $ F(a_1, \,b_1;\, c_1;\,\mathrm{z}), $ 
 $ F(a_2, \,b_2;\, c_2;\,\mathrm{z}), $ given by (\ref{86}), into (\ref{93}), next multiply by $(-\mathrm{z})^{b_1}, $  and note that $ b_2-c_2+1=b_1,\, a_2-c_2+1=a_1, $
to arrive at

\begin{eqnarray}\label{96}
 (-\mathrm{z})^{b_1}\omega(\mathrm{z}) 
 &=& (-\mathrm{z})^\frac{n-2}{2}
\left[\frac{\Gamma(c_1) \Gamma(b_1-a_1)}
{\Gamma(b_1) \Gamma(c_1-a_1)}
+ C_2  \frac{\Gamma(c_2) \Gamma(b_2-a_2)}
{\Gamma(b_2)  \Gamma(c_2-a_2) } 
 \right]   F(a_1, \,a_2; \,\frac{4-n}{2};\,\frac{1}{\mathrm{z}})
\nonumber \\ & & + \left[\frac{\Gamma(c_1) \Gamma(a_1-b_1)}
{\Gamma(a_1) \Gamma(c_1-b_1)}
+ C_2  \frac{\Gamma(c_2) \Gamma(a_2-b_2)}{\Gamma(a_2)  \Gamma(c_2-b_2) } 
 \right]  F(b_1, \,b_2; \,\frac{n}{2};\,\frac{1}{\mathrm{z}}).
\end{eqnarray}
For $ n>2, $ we combine (\ref{96}) with  (\ref{eq39c}) to deduce (\ref{94}); for $ n=1 $ (and thus $\alpha>0$) we again arrive at (\ref{94}) by assuming (\ref{eq41}).  Then (\ref{95}) results upon a substitution of (\ref{94})  in (\ref{96}).
\\
\\
\noindent
\textit{Case II: } Next we proceed with the case where $ a_1-b_1=a_2-b_2=0, $ that is $n=2. $  In this case the numbers $a_1,\, $ $a_2,\, $ $c_1-b_1,\, $ $c_2-b_2\, $ are positive non integers. Then, the explicit expression for the functions 
$ F(a_1, a_1;\, c_1;\,\mathrm{z}) $ 
and 
$F(a_2,\, a_2;\, c_2;\,\mathrm{z}) $ appearing in (\ref{93}), is given by (\ref{87}). 
Substituting (\ref{87}) into (\ref{93}) and then multiplying  by $(-\mathrm{z})^{b_1}, $ taking into account the relations  $a_2=1-c_1+a_1 $ and $a_1=1-c_2+a_2, $ we arrive at \begin{multline}\label{97}
(-\mathrm{z})^{b_1}\omega(\mathrm{z}) = \left[
\frac{\Gamma(c_1)}{\Gamma(a_1)\Gamma(c_1-a_1)} + \frac{C_2 \Gamma(c_2)
}{\Gamma(a_2)\Gamma(c_2-a_2)}
\right]  \ln(-\mathrm{z}) \sum\limits_{k=0}^\infty
 \frac{(a_1)_k (a_2)_k}{(k!)^2}
\mathrm{z}^{-k} + \sum\limits_{k=0}^\infty
 \frac{(a_1)_k (a_2)_k}{(k!)^2}
\, \mathrm{z}^{-k} \times \\ \times
\bigg[ \frac{\Gamma(c_1) \big(2\Psi(k+1) - \Psi(a_1+k)- \Psi(c_1 -a_1-k)\big)}{ \Gamma(a_1)  \Gamma(c_1 - a_1) } + C_2 \; \frac{\Gamma(c_2)  \big(2\Psi(k+1) - \Psi(a_2+k)- \Psi(c_2-a_2-k)\big)}{ \Gamma(a_2)  \Gamma(c_2 - a_2) } \bigg] \end{multline}
Then (\ref{97}) combined with (\ref{eq39c}), yield  
 (\ref{94})  with $n=2 $ there,
 and for this value of $ C_2, $ (\ref{97}) also implies (\ref{95}). 
\\
\\
\noindent
\textit{Case III:} We consider now the case that none of the numbers $a_1,\, a_2,\, $ $ c_1-b_1,\, $ $ c_2-b_2\, $  is equal to a nonpositive integer and  $b_1-a_1=b_2-a_2 = m, $ that is $ n = 2m + 2,\, $ for $ m = 1, 2,... $  

We first assume that the numbers $c_1 - a_1, \,$  $ c_2 - a_2\, $  are not equal to a positive integer. Then, the explicit expression for the functions $ F(a_1, \,a_1+ m ; \, c_1;\,\mathrm{z}) $ and $ F(a_2,\,a_2+m ;\, c_2;\,\mathrm{z}) $ appearing in (\ref{93}), is given in (\ref{88}). Therefore, we substitute (\ref{88}) into (\ref{93}) and then multiply  by $(-\mathrm{z})^{b_1}, $ to arrive at \begin{multline}\label{98}
(-\mathrm{z})^{b_1} \omega(\mathrm{z}) = \left[
 \frac{\Gamma(c_1)  }
 {\Gamma(a_1+m)\Gamma(c_1-a_1)}  + \frac{C_2 \Gamma(c_2)  }
 {\Gamma(a_2+m)\Gamma(c_2-a_2)}
 \right] \bigg[\ln(-\mathrm{z})  \, \sum\limits_{k=0}^\infty
 \frac{(a_1)_{k+m} (a_2)_{k+m}}{k!(k+m)!}
\,\mathrm{z}^{-k} \\ + (-\mathrm{z})^{m}
 \sum\limits_{k=0}^{m-1} \frac{\Gamma(m-k)(a_1)_k(a_2)_k}{k!} (-\mathrm{z})^{-k}
\bigg] + O(1),\qquad\mbox{as}\;\;\mathrm{z}\rightarrow -\infty.\end{multline} 
Then, condition (\ref{eq39c}) combined with \eqref{98} yields again the value (\ref{94}) of $ C_2, $ so that zero out the coefficient in the brackets above, whereafter (\ref{95}) follows immediately. 

If at least one of the numbers 
$c_1- a_1,\, c_2 - a_2 $ is equal to an integer $l=1,2,\ldots, $
 then we can use the formula (\ref{89}) to get the expression of $ F(a_1, \,a_1+m; \, a_1+l;\,\mathrm{z}) $ 
 or  
$ F(a_2, \,a_2+ m; \, a_2 + l;\,\mathrm{z}), $ respectively. Arguing as above, we derive again (\ref{94}) and (\ref{95}).
\\
\\
\noindent
\textit{Case IV: } We conclude with the case where at least one of the numbers $a_1,\, $ $a_2,\, $ $c_1-b_1,\, $ $c_2-b_2\, $  is equal to a non-positive integer. 
We will consider only the case where $a_1=-m $ for some $m=0,1,2,\ldots, $ while 
none of the two numbers $c_1-b_1, $ $c_2-b_2 $ is a non-positive integer. The argumentation for the other cases is quite similar. 
 
The expressions of the first and second hypergeometric function in 
(\ref{93}) are given by formulas 
 (\ref{90}), (\ref{86}), respectively. Substituting (\ref{86}), (\ref{90}) into (\ref{93}) and then multiplying  by $(-\mathrm{z})^{b_1}, $ we arrive at

\begin{eqnarray}\label{99}
 (-\mathrm{z})^{b_1}\omega(\mathrm{z}) &=& (-\mathrm{z})^\frac{n-2}{2}
\bigg[\frac{(b_1)_m}{(c_1)_m} + C_2 \, \frac{\Gamma(c_2) \Gamma(b_2-a_2)}
{\Gamma(b_2)  \Gamma(c_2-a_2) }
\bigg] \, F(a_2, \, -m ; \,\frac{4-n}{2};\, \frac{1}{\mathrm{z}}) \nonumber \\
& & +\,  C_2 \, \frac{\Gamma(c_2) \Gamma(a_2-b_2)}
{\Gamma(a_2) \Gamma(c_2-b_2)  } \;
F(b_2, \,b_1; \, \frac{n}{2};\, \frac{1}{\mathrm{z}}),
\end{eqnarray}
where we have used that $ F(-m,\, \mathrm{b}_1;\, \mathrm{c}_1;\, \mathrm{z})=\frac{(-\mathrm{z})^m \,(b_1)_m}{(c_1)_m}\;F(a_2,\,-m,;\;a_2-b_2+1;\;\frac{1}{\mathrm{z}}). $ 
Since $ n>2, $ condition (\ref{eq39c}) yields (\ref{94}), thereafter (\ref{95}) results upon a substitution of this value of $C_2, $ in (\ref{99}).

The proof of (\ref{94}), (\ref{95}), is now completed. 
At this point, we have completely determined the solution $ \omega $
for any $ n, \alpha, b, $  subject of course to the assumption of Theorems \ref{Theorem1}-\ref{Theorem3}, i.e. $ 2-\alpha\leq b <n+1,\, \alpha\in(-1, 1). $ 
 
We are now in position to compute the limit 
$ K(n,\alpha, b):=-\lim\limits_{t\rightarrow 0^+}t^\alpha B'(t)= 
2\lim\limits_{\mathrm{z}\rightarrow 0^-}(-\mathrm{z})^\frac{\alpha + 1}{2} \omega'(\mathrm{z}). $ 
To this aim, we differentiate (\ref{93}), exploiting (\ref{92}),  to obtain
\begin{flalign*}
\omega'(\mathrm{z}) 
\, =\,  \frac{a_1\, b_1}{c_1}
\,F(a_1+1,\, b_1+1;
\, c_1+1;\, \mathrm{z})
& \,- \,
C_2 \frac{1-\alpha}{2}\,(-\mathrm{z})^{-\frac{\alpha+1}{2}}
\,F(a_2,\,b_2;\, c_2;\, \mathrm{z})
\\
& \, - \, 
C_2 \,\frac{a_2\, b_2}{c_2}\,(-\mathrm{z})^\frac{1-\alpha}{2}
\,F(a_2+1,\,b_2+1;\, c_2+1;\, \mathrm{z}),
\end{flalign*}
then we take the limit as $ \mathrm{z}\to0^-, $  to conclude that
\begin{equation*} 
K(n,\alpha, b)
=
2\lim\limits_{\mathrm{z}\rightarrow 0^-}(-\mathrm{z})^\frac{\alpha + 1}{2} \omega'(\mathrm{z})
=-(1-\alpha) C_2
\end{equation*}
which, in view of the specified value  (\ref{94}) of $ C_2, $  proves (\ref{eq42}).

Let us show now the positivity and monotonicity of $B. $ 
We first assume that $5-n-b<0. $
Then the positivity of $B$ follows from the fact that if there exist $t_0>0$  such that $B(t_0)=0, $ then since $\lim\limits_{t\to \infty} B(t)=0 , $ there exists $t_m> t_0$ where $B$ attains local non-negative maximum or local non-positive minimum, which contradicts the ode (\ref{eq38a}). Therefore $B$ is positive and the same argument shows that $B$ is decreasing. 

If $5-n-b \geq 0, $  then we make the substitution $ g(t)=(1+t^2)^{b_1} B(t) $ which  transforms problem  (\ref{eq38}) to  \begin{numcases}{\label{100}}
 t\, (1+t^2)^2\,  g''(t) \, + \, [\alpha \,+\, (3-n)\,t^2] \, (1+t^2) \, g'(t) \, + \,  
 \frac{(n+1-b)(3-n -2\alpha -b)}{4}\, t \, g(t)=0,\;\;  t>0,\qquad \label{100a} \\
g(0)=1, \\ \lim_{t\rightarrow\infty}g(t) \in {\mathbb R}_+.\label{100c} \end{numcases} Notice here, that the positivity of limit (\ref{100c}),  follows directly from the explicit expression of $ B(t)=\omega(z).  $ Indeed, in  \textit{Case I}, we exploit the Euler's reflection formula
$$\Gamma(s)\, \Gamma(1-s)=\frac{\pi}{\sin(s \pi)} , \qquad s\in\mathbb R,$$
and we use in addition the reflection formula
$$\Psi(1-s)=\Psi(s) + \pi \,\cot(s \pi ),$$
in \textit{Cases II-III}, or the relation
$$\Gamma(s-m)\;=\;(-1)^{m-1}\;\frac{\Gamma(-s)\;\Gamma(1+s)}{\Gamma(m+1-s)},$$
in \textit{Case IV}, to get that
$$ \lim_{t\rightarrow\infty}g(t) = \frac{1}{\pi}
\; \Gamma(\mathrm{c}_1) \;
\Gamma(1-\mathrm{a}_1) \; 
\frac{\Gamma(\mathrm{b}_2)}{
\Gamma\big(\frac{n}{2}\big)} \; \sin\big(\frac{1-\alpha}{2}\,\pi\big) >0. $$

Now we can apply a minimum principle argument to problem \eqref{100}, to get the non negativity of $ g. $ Indeed, if there exists $ t_0>0 $ such that $ g(t_0)<0, $ then since  $ g(0)=1,\, $ 
 $\lim\limits_{t\to \infty} g(t)\geq 0 , $ there exists $ t_m> t_0 $ where $ g $ attains local negative minimum, which contradicts the ode (\ref{100a}). It follows that $ g $  is non negative, hence $B$ is  non negative. Then (\ref{eq58})  together with the negativity of $ B' $ in a neighborhood of the origin, yield the monotonicity and positivity of $B.$
 
The asymptotics for $ B $ in \eqref{eq40} follow by conditions (\ref{eq38b})-(\ref{eq38c}) together with the positivity of $B. $ As regards the asymptotics for $B' $ in \eqref{eq40}, we differentiate the expression (\ref{93}) using the differentiation formula (\ref{92}). 
Finally, \eqref{eq41} is directly verified, by substituting the explicit expression of $ B(t)=\omega(-t^2) $ via \eqref{96}-\eqref{99}\textemdash depending on the cases therein\textemdash and its corresponding derivative.

\end{document}